\def\N{\mathbb{N}}
\def\Z{\mathbb{Z}}
\def\F{\mathbb{F}}
\newtheorem{theorem}{Theorem}[section]
\newtheorem{proposition}[theorem]{Proposition}
\newtheorem{corollary}[theorem]{Corollary}
\newtheorem{lemma}[theorem]{Lemma}
\newtheorem{conjecture}[theorem]{Conjecture}
\begin{document}
\title{On Collatz Conjecture for binary polynomials}
\author{ Luis H. Gallardo  and Olivier Rahavandrainy\\
Univ. Brest, UMR CNRS 6205\\
Laboratoire de Math\'ematiques de Bretagne Atlantique\\
6, Avenue Le Gorgeu, C.S. 93837, 29238 Brest Cedex 3, France.\\
e-mail: Luis.Gallardo@univ-brest.fr \\
Olivier.Rahavandrainy@univ-brest.fr}
\maketitle

\begin{itemize}
\item[a)]
Running head: Collatz Conjecture
\item[b)]
Keywords: finite fields,
characteristic $2$,  odd (even) polynomials
\item[c)]
Mathematics Subject Classification (2010): 11T55, 11T06.
\item[d)]
Corresponding author:
\begin{center} O. Rahavandrainy
\end{center}
\end{itemize}
{\bf{Abstract}}
We build a variant of  Collatz Conjecture for polynomials over $\F_2$ and we prove that it is solved. By the way, we give several examples.
{\section{Introduction}}
The Collatz conjecture is one of the most famous unsolved problems in Arithmetics. As written in (\cite{ref-wiki}), ``it concerns sequences of integers in which each term is obtained from the previous term as follows: if the previous term is even, the next term is one half of the previous term. If the previous term is odd, the next term is 3 times the previous term plus 1. The conjecture says that these sequences always reach 1, no matter which positive integer is chosen to start the sequence''.
We may reformulate this construction. For a given positive integer $n$, consider the $2$-adic valuation $a_0$ of $n$: $n=2^{a_0} n_1$, where $n_1$ is odd.  Put $n_2 = 1+3n_1$, and again, consider the $2$-adic valuation,  $a_2$, of $n_2$:  $n_2=2^{a_2} n_3$, $n_3$ odd  and so on...
We get two sequences of odd and even integers: $[n_1, n_3,\ldots]$ and $[n_2, n_4,\ldots]$. The conjecture states that for any integer $n$, there exists a finite integer $m$ such that for all $t \geq m$, $n_{2t}= 2$ and   $n_{2t+1}= 1$. So, the above two sequences $(n_{2k})_k$ and  $(n_{2k+1})_k$ are both eventually constant.

Many studies are done in order to reach a proof of that Conjecture. See for example, \cite{Furuta} and \cite{Izadi}.\\

Now, we consider a variant (of this problem) with binary polynomials. Let $A \in \F_2[x]$ be a nonzero polynomial. We may think of $x(x+1) \in \F_2[x]$ as being the analogue of $2 \in \Z$. So, we say (\cite{Gall-Rahav4}) that $A$ is \emph{odd} if $\gcd(A,x(x+1)) = 1$, i.e., if it has no linear factor. $A$ is \emph{even}, otherwise. The first odd polynomial after $1$ is $M:=x^2+x+1$. So, the variant of $1+3n$ (for integers) is $1+MA$ (for polynomials).

We denote by $val_x(S)$ (resp. $val_{x+1}(S)$) the valuation at $x$ (resp. at $x+1$) of a polynomial $S$:
$$S=x^{val_x(S)} (x+1)^{val_{x+1}(S)} S_1, \text{ where $S_1$ is odd}.$$

For a fixed nonzero binary polynomial $A$, we define the ``Collatz transformations'' by giving the following three sequences of integers $(a_{2k})_k$ and of polynomials $(A_{2k})_k$, $(A_{2k+1})_k$:\\
\\
$A_0= A,\ a_0 =val_x(A_0)$, $b_0 = val_{x+1}(A_0)$, \\
$A_1$ the odd polynomial such that $A_0 = x^{a_0}(x+1)^{b_0} A_1$,\\
$A_2 = 1+MA_1$, $a_2=val_x(A_2),\ b_2=val_{x+1}(A_2)$, $\displaystyle{A_3 = \frac{A_2}{x^{a_2}(x+1)^{b_2}}}$. \\
$\vdots$
\\
$A_{2k} = 1+MA_{2k-1}$, $a_{2k}=val_x(A_{2k}),\ b_{2k}=val_{x+1}(A_{2k})$, $\displaystyle{A_{2k+1} = \frac{A_{2k}}{x^{a_{2k}}(x+1)^{b_{2k}}}}$. \\
$\vdots$
\\
Note that $A_0$ may be odd and $A_{2k}$ (resp. $A_{2k-1}$) is even (resp. odd) if $k \geq 1$.\\
We may formulate a Collatz Conjecture for binary polynomials as follows.

\begin{conjecture} \label{collatzconject}
For a given $A \in \F_2[x] \setminus \{0\} $, there exists $m \in \N^*$ such that for all $k \geq m$, $A_{2k} = x(x+1)$ and $A_{2k+1} =1$.
\end{conjecture}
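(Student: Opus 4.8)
The plan is to exploit a feature absent from the integer case: in $\F_2[x]$ the operations ``multiply by $M$'' and ``strip the factors $x$ and $x+1$'' interact with the \emph{degree} in a controlled, monotone way. Throughout I write $x(x+1)=M+1$.

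\emph{Step 1 (the odd terms have non-increasing degree).} If $B\in\F_2[x]$ is odd then $B(0)=B(1)=1$, and since $M(0)=M(1)=1$ we get $(1+MB)(0)=(1+MB)(1)=0$, so $x(x+1)\mid 1+MB$. Applying this with $B=A_{2k-1}$ (odd for every $k\ge 1$) gives $a_{2k}\ge 1$ and $b_{2k}\ge 1$, hence $a_{2k}+b_{2k}\ge 2$. Since $\deg(1+MA_{2k-1})=\deg A_{2k-1}+2$, we get $\deg A_{2k+1}=\deg A_{2k-1}+2-a_{2k}-b_{2k}\le\deg A_{2k-1}$. Thus $(\deg A_{2k+1})_{k\ge 0}$ is a non-increasing sequence of non-negative integers, so it is eventually equal to some constant $d$.

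\emph{Step 2 (reduction to a cycle).} Once the degree has stabilized, say $\deg A_{2k+1}=d$ for all large $k$, the identity of Step 1 forces $a_{2k}+b_{2k}=2$, hence $a_{2k}=b_{2k}=1$, so $A_{2k+1}=(1+MA_{2k-1})/(M+1)$. As there are only finitely many polynomials over $\F_2$ of degree $d$, the orbit $(A_{2k+1})$ is eventually periodic: there exist odd polynomials $B_0,\dots,B_{p-1}$, all of degree $d$, with $(M+1)B_{i+1}=1+MB_i$ for all $i\in\Z/p\Z$. It suffices to show $d=0$, i.e. $B_i=1$ for all $i$: indeed $A_{2k+1}=1$ yields $A_{2k+2}=1+M=x(x+1)$ and $A_{2k+3}=1$, which is exactly the assertion of the Conjecture (with $m$ the first such index; note $m\ge1$, e.g. already when $A=1$).

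\emph{Step 3 (infinite descent kills the cycle).} In characteristic $2$ the relation $(M+1)B_{i+1}=1+MB_i$ becomes $M(B_{i+1}+B_i)=1+B_{i+1}$, so $M\mid 1+B_i$ for every $i$; write $B_i=1+MC_i$ with $C_i\in\F_2[x]$. Substituting and cancelling one factor $M$ gives $M(C_{i+1}+C_i)=C_{i+1}$ for all $i\in\Z/p\Z$ — the same shape of relation. In particular $M\mid C_i$ for every $i$, and $C_{i+1}=0$ forces $C_i=0$; running around the cycle, the $C_i$ are therefore either all zero or all nonzero. If none is zero, then the polynomials $C_i/M$ are again nonzero and satisfy the very same relation, so by induction $M^k\mid C_i$ for all $k\ge 1$, forcing $C_i=0$ — a contradiction. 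Hence all $C_i=0$, so $B_i=1$ and $d=0$. Combining with Step 1–2, $\deg A_{2k+1}\to 0$, so $A_{2k+1}=1$ and $A_{2k}=x(x+1)$ for all large $k$, proving the Conjecture. The conceptual heart — and the only place needing genuine care — is the combination of Steps 2 and 3: one must notice that \emph{degree preservation} rigidly pins the map down as $B\mapsto(1+MB)/(M+1)$, and then that this map, read around a cycle, is self-similar enough to feed an infinite descent on divisibility by $M$; the monotonicity of the degree and the finiteness of the state space are routine.
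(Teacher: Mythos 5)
Your proposal is correct, and its first two steps coincide with the paper's own route: your Step 1 is Lemma \ref{lesakbk} together with Lemma \ref{lesdkellk} (oddness of $A_{2k-1}$ forces $a_{2k},b_{2k}\ge 1$, hence non-increasing odd degrees), and your Step 2 is Corollaries \ref{stationnaire}--\ref{cyclic} (stabilized degree forces $a_{2k}=b_{2k}=1$, finiteness gives a cycle with $(M+1)B_{i+1}=1+MB_i$ around it). Where you genuinely diverge is the cycle-killing step, i.e.\ the paper's Proposition \ref{lesAjodd=1}: there the relations are read as a circulant linear system over $\F_2(x)$ with matrix $C$, and the computation $\det(C)=M^t+(1+M)^t\neq 0$ yields that the unique solution is $(1,\dots,1)$. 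You instead rewrite $(M+1)B_{i+1}=1+MB_i$ in characteristic $2$ as $M(B_{i+1}+B_i)=1+B_{i+1}$, deduce $M\mid 1+B_i$, substitute $B_i=1+MC_i$ to get the self-similar relation $M(C_{i+1}+C_i)=C_{i+1}$, and run an infinite descent: either all $C_i$ vanish or $M^k\mid C_i$ for every $k$, which is impossible for a nonzero polynomial of bounded degree. Both arguments are sound (your descent checks out: the substitution, the cancellation of one factor of $M$, and the propagation of vanishing around the cycle are all correct); yours is more elementary and self-contained, avoiding the determinant of a circulant matrix and the (small but necessary) verification that $M^t+(1+M)^t$ is nonzero, while the paper's linear-algebra formulation is shorter on the page and makes the uniqueness of the all-ones solution immediate once the determinant is computed.
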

We prove it by Theorem \ref{collatz0}.

\begin{theorem} \label{collatz0}
Let $A$ be nonzero binary polynomial, then the sequences of polynomials obtained from Collatz transformations are of finite length ${\ell}_A$.\\
More precisely, these sequences are:
$$[A_0,\ldots, A_{2m-2}, x^2+x] \text{ and } \ [A_1,\ldots, A_{2m-1},1],$$
where $m \in \N^*$, ${\ell}_A = m +1 \leq 2^{\deg(A)-1}$.
\end{theorem}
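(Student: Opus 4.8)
The plan is to study the deterministic map $T$ on odd polynomials defined by $T(P)=$ the odd part of $1+MP$, so that the odd‑indexed subsequence is $A_1,T(A_1),T^2(A_1),\dots$, to show its only periodic orbit is the fixed point $\{1\}$, and then to read off finiteness and the length bound from a count of odd polynomials. Note first that the construction never produces $0$: if $A_{2k-1}\neq0$ then $MA_{2k-1}$ has degree $\geq2$, so $A_{2k}=1+MA_{2k-1}\neq0$, and its odd part $A_{2k+1}$ is again nonzero; hence everything below is well defined.

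The first step is to show degrees cannot grow along the odd subsequence. Since $A_{2k-1}$ is odd, $A_{2k-1}(0)=A_{2k-1}(1)=1$ in $\F_2$, and because $M(0)=M(1)=1$ we get $A_{2k}(0)=A_{2k}(1)=0$; thus $x(x+1)\mid A_{2k}=1+MA_{2k-1}$, i.e. $a_{2k}\geq1$ and $b_{2k}\geq1$. As $\deg A_{2k}=\deg(MA_{2k-1})=\deg A_{2k-1}+2$, this gives $\deg A_{2k+1}=\deg A_{2k}-a_{2k}-b_{2k}\leq\deg A_{2k-1}$. So $(\deg A_{2k+1})_k$ is non‑increasing; being bounded below it is eventually constant, after which the odd subsequence takes values in the finite set of odd polynomials of that fixed degree, hence (by determinism and pigeonhole) is eventually periodic.

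The second and main step is to eliminate every nontrivial cycle. On a periodic orbit the degree is constant, say equal to $d$; if $d=0$ the orbit is $\{1\}$, so assume $d\geq1$. Constancy of the degree forces $a_{2k}+b_{2k}=2$, hence $a_{2k}=b_{2k}=1$ and $A_{2k}=x(x+1)A_{2k+1}$ for every index on the orbit. Writing $N=x^2+x=M+1$ and combining $NA_{2k+1}=1+MA_{2k-1}$ with $N=M+1$ yields the clean identity
$$N\,(1+A_{2k+1})=M\,(1+A_{2k-1}).$$
Since no term on the orbit equals $1$ (all have degree $d\geq1$), every factor $1+A_{2k-1}$ is nonzero; multiplying this identity around a cycle of length $t\geq1$ and cancelling the common nonzero product (the $A_{2k+1}$‑product equals the $A_{2k-1}$‑product after cyclic reindexing) leaves $N^{t}=M^{t}$ in $\F_2[x]$. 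This is impossible, since $x\mid N^{t}$ but $x\nmid M^{t}$. Hence the only periodic orbit is $\{1\}$, so the odd subsequence must reach $1$; once $A_{2k+1}=1$ we get $A_{2k+2}=1+M=x(x+1)$ and $A_{2k+3}=1$, so the sequences stabilize exactly as in Conjecture \ref{collatzconject}.

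For the length bound I would count odd polynomials. Before it first reaches $1$, the odd subsequence cannot repeat a value, since a repeat would force a cycle, necessarily $\{1\}$ by the previous step; thus $A_1,A_3,\dots$ together with the terminal $1$ are pairwise distinct odd polynomials of degree $\leq\deg A$. A direct count gives, for $n\geq1$, exactly $2^{\,n-1}$ odd polynomials of degree $\leq n$ (one in degree $0$, none in degree $1$, and $2^{\,j-2}$ in each degree $j\geq2$, which sum to $2^{\,n-1}$), whence $\ell_A\leq 2^{\deg A-1}$. The delicate point is the cycle‑elimination step; once the identity $N(1+A_{2k+1})=M(1+A_{2k-1})$ is established, the rest is bookkeeping.
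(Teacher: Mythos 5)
Your proposal is correct, and its overall skeleton matches the paper's: you establish $a_{2k},b_{2k}\geq 1$, deduce that the degrees of the odd terms are non-increasing and eventually constant with $a_{2k}=b_{2k}=1$, invoke finiteness of odd polynomials of a fixed degree to get eventual periodicity, rule out nontrivial cycles, and bound the length by counting odd polynomials of degree at most $\deg A$ (your ``direct count'' of $2^{d-2}$ odd polynomials in degree $d\geq 2$ is exactly the paper's Lemma on $\mathcal{O}_d$, which the paper proves by induction; your parity-of-coefficients count is fine and even simpler). The one place where you genuinely diverge is the crucial cycle-elimination step, the paper's Proposition asserting $A_{2k+1}=1$ on the eventual cycle. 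The paper views the relations $MA_{2k-1}+(1+M)A_{2k+1}=1$ around a cycle of length $t$ as a linear system with circulant matrix of determinant $M^t+(1+M)^t\neq 0$, so the visible solution $(1,\ldots,1)$ is the only one. You instead rewrite the same relation as the multiplicative identity $N(1+A_{2k+1})=M(1+A_{2k-1})$ with $N=M+1=x^2+x$, multiply it around the cycle, cancel the common nonzero product (legitimate in the integral domain $\F_2[x]$, since no term of positive degree can equal $1$), and reach $N^t=M^t$, contradicted by $x\mid N$, $x\nmid M$. The two arguments hinge on the same algebraic fact, $M^t\neq(1+M)^t$, but yours avoids the linear-algebra apparatus (circulant determinant, uniqueness of solutions over an integral domain) and makes explicit the determinism/no-repetition point that the paper leaves implicit when bounding $m+1$; the paper's version, in exchange, identifies the limit cycle directly as the solution of a system rather than arguing by contradiction. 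Both are complete; your route is somewhat more elementary and self-contained.
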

The upper bound for the length ${\ell}_A$ seems too big. But, we are unable to improve it. Many computations show that $\displaystyle{{\ell}_A \leq \frac{\deg(A)}{2}}$. However, it happens that ${\ell}_A  \simeq \deg(A)$. For several families of polynomials, we obtain some regularity on the degrees involved in the sequences. We also may have $\displaystyle{{\ell}_A \simeq \frac{\deg(A)}{2}}$ (see Section \ref{exemples}).

\section{Proof of Theorem \ref{collatz0}} \label{theoremproof}
First, we shall need general results about the numbers of odd and even polynomials of a given degree $d \geq 2$.
Denote by ${\mathcal{P}}_d$ the set of all polynomials of degree $d$ and consider:
$$\begin{array}{l}
{\mathcal{P}}^{0,0}_d:=\{S \in {\mathcal{P}}_d : S(0) = 0\}, {\mathcal{P}}^{0,1}_d:=\{S \in {\mathcal{P}}_d : S(0) = 1\}\\
\\
{\mathcal{P}}^{1,0}_d:=\{S \in {\mathcal{P}}_d : S(1) = 0\}, {\mathcal{P}}^{1,1}_d:=\{S \in {\mathcal{P}}_d : S(1) = 1\}\\
\\
{\mathcal{O}}_d:=\{S \in {\mathcal{P}}_d : S \text{ is odd}\} =  {\mathcal{P}}^{0,1}_d \cap {\mathcal{P}}^{1,1}_d.
\end{array}$$
One has ${\mathcal{P}}_0 = {\mathcal{O}}_0 =\{1\}$ and ${\mathcal{O}}_1=\emptyset$.
\begin{lemma} \label{oddpolyn}
i) The four sets ${\mathcal{P}}^{0,0}_d, {\mathcal{P}}^{0,1}_d, {\mathcal{P}}^{1,0}_d$ and ${\mathcal{P}}^{1,1}_d$ have all the same cardinality $2^{d-1}$.\\
ii) The set ${\mathcal{O}}_d$ contains exactly $2^{d-2}$ polynomials if $d \geq 2$.
\end{lemma}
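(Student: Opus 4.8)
The plan is to use the elementary fact that over $\F_2$ every polynomial of degree $d$ is monic, so $\mathcal{P}_d$ is parametrized by its coefficient vector $(c_0,\ldots,c_{d-1}) \in \F_2^d$ via $S = x^d + \sum_{i<d} c_i x^i$; in particular $|\mathcal{P}_d| = 2^d$. Under this identification $S(0) = c_0$ and $S(1) = 1 + c_0 + c_1 + \cdots + c_{d-1}$, so each of the conditions ``$S(0)=\varepsilon$'' and ``$S(1)=\varepsilon$'' ($\varepsilon\in\F_2$) is a single affine-linear condition on the vector $(c_0,\ldots,c_{d-1})$. Since a nonconstant affine functional $\F_2^d \to \F_2$ has two fibers of equal size $2^{d-1}$, part i) follows at once.

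For a more structural argument (which also makes the degree hypotheses transparent), I would instead exhibit explicit bijections. For part i), when $d \geq 1$ the map $S \mapsto S+1$ is an involution of $\mathcal{P}_d$ that flips both $S(0)$ and $S(1)$; hence it interchanges $\mathcal{P}^{0,0}_d$ with $\mathcal{P}^{0,1}_d$ and $\mathcal{P}^{1,0}_d$ with $\mathcal{P}^{1,1}_d$. As $\{\mathcal{P}^{0,0}_d,\mathcal{P}^{0,1}_d\}$ and $\{\mathcal{P}^{1,0}_d,\mathcal{P}^{1,1}_d\}$ are each partitions of $\mathcal{P}_d$, all four sets have size $\tfrac12 |\mathcal{P}_d| = 2^{d-1}$.

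For part ii), I would work inside $\mathcal{P}^{0,1}_d$, which by part i) has $2^{d-1}$ elements and which is partitioned by $\mathcal{P}^{0,1}_d \cap \mathcal{P}^{1,0}_d$ and $\mathcal{P}^{0,1}_d \cap \mathcal{P}^{1,1}_d = \mathcal{O}_d$. When $d \geq 2$, the map $S \mapsto S + x$ sends $\mathcal{P}_d$ to itself (the degree is unchanged precisely because $d \geq 2$), fixes the value at $0$, and flips the value at $1$; hence it is a bijection between the two pieces of that partition, giving $|\mathcal{O}_d| = \tfrac12 \cdot 2^{d-1} = 2^{d-2}$.

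I do not expect a genuine obstacle here; the only points needing care are (a) that the translations $S \mapsto S+1$ and $S \mapsto S+x$ must preserve the degree $d$, which is exactly why the hypotheses $d\ge 1$, resp. $d \ge 2$, are needed, and (b) the degenerate cases recorded just before the lemma ($\mathcal{P}_0 = \mathcal{O}_0 = \{1\}$ and $\mathcal{O}_1 = \emptyset$), which confirm that part ii) is correctly restricted to $d \ge 2$.
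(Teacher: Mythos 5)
Your proof is correct, and part ii) takes a genuinely different route from the paper. For part i) you and the paper do essentially the same thing: translation bijections (the paper uses $S \mapsto S+1$ together with $S(x)\mapsto S(x+1)$; you observe that $S\mapsto S+1$ alone flips both values and then halve each of the two partitions of $\mathcal{P}_d$, which is a mild streamlining — and your affine-functional remark is just another phrasing of the same count). The real divergence is in part ii): the paper proves $\#\mathcal{O}_d = 2^{d-2}$ by induction on $d$, writing every $S \in \mathcal{P}^{0,1}_d$ uniquely as $(x+1)^s S_1$ with $S_1$ odd, so that $\#\mathcal{P}^{0,1}_d = \#\mathcal{O}_d + \#\mathcal{O}_{d-1} + \cdots + \#\mathcal{O}_2 + \#\mathcal{O}_0$, and then telescoping $2^{d-1} = \#\mathcal{O}_d + 2^{d-2}$. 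You instead note that for $d \ge 2$ the involution $S \mapsto S+x$ preserves degree, fixes $S(0)$, and flips $S(1)$, hence swaps the two halves of the partition $\mathcal{P}^{0,1}_d = \bigl(\mathcal{P}^{0,1}_d \cap \mathcal{P}^{1,0}_d\bigr) \cup \mathcal{O}_d$, giving $\#\mathcal{O}_d = 2^{d-2}$ directly. Your argument is shorter, avoids induction, and makes the role of the hypothesis $d\ge 2$ transparent (degree preservation of $S\mapsto S+x$); the paper's argument, while longer, incidentally records the decomposition of $\mathcal{P}^{0,1}_d$ by the valuation at $x+1$, which is in the same spirit as the factorizations used throughout the rest of the paper. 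Both are complete proofs.
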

\begin{proof}
i): By the bijective map: $S \mapsto S+1$, one has $\# {\mathcal{P}}^{0,0}_d= \#{\mathcal{P}}^{0,1}_d$ and $\# {\mathcal{P}}^{1,0}_d= \#{\mathcal{P}}^{1,1}_d$.
Analogously, the bijection: $S(x) \mapsto S(x+1)$ gives $\# {\mathcal{P}}^{0,0}_d= \#{\mathcal{P}}^{1,0}_d$. It remains then to see that  ${\mathcal{P}}_d$ is a disjoint union of
${\mathcal{P}}^{0,0}_d$ and ${\mathcal{P}}^{0,1}_d$ and $\#{\mathcal{P}}_d = 2^d$.\\
ii): By induction on $d$. The case $d=2$ is trivial since ${\mathcal{O}}_2 = \{x^2+x+1\}$. Now, suppose that $\#{\mathcal{O}}_s = 2^{s-2}$, for $2 \leq s \leq d-1$. We remark that
${\mathcal{P}}^{0,1}_d =\{(x+1)^s S_1: 0 \leq s \leq d, s \not= d-1, S_1 \in {\mathcal{O}}_{d-s} \}$. Hence,
$$\# {\mathcal{P}}^{0,1}_d = \# {\mathcal{O}}_{d} +  \# {\mathcal{O}}_{d-1}+\cdots+ \#  {\mathcal{O}}_{2}+\#  {\mathcal{O}}_{0}.$$
Therefore, $2^{d-1} = \# {\mathcal{O}}_{d} + 2^{d-3} +\cdots + 1 + 1 =  \# {\mathcal{O}}_{d} + 2^{d-2}$ and $\# {\mathcal{O}}_{d} = 2^{d-2}$.
\end{proof}
For $k \geq 0$, we put $d_{2k}:=\deg(A_{2k})$ and $d_{2k+1}:=\deg(A_{2k+1})$.\\
We obviously get the following lemmas.
\begin{lemma} \label{lesakbk}
One has $a_0, b_0 \geq 0$ and $a_{2k}, b_{2k} \geq 1$, for any $k \geq 1$.
\end{lemma}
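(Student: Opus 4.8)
The plan is to separate the trivial base case from the inductive-type observation for $k\ge 1$. For the index $0$, there is nothing to do: $a_0=val_x(A_0)$ and $b_0=val_{x+1}(A_0)$ are, by definition of the valuations at $x$ and at $x+1$, nonnegative integers (they are $0$ exactly when $A_0(0)\neq 0$, resp. $A_0(1)\neq 0$). So the content is entirely in the claim $a_{2k},b_{2k}\ge 1$ for $k\ge 1$, i.e. that $A_{2k}$ is divisible by $x$ and by $x+1$.

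For this I would exploit that $A_{2k-1}$ is \emph{odd} whenever $k\ge 1$ (for as long as the sequence is defined): for $k=1$ this is literally the defining property of $A_1$, and for $k\ge 2$ the polynomial $A_{2k-1}=A_{2k-2}/(x^{a_{2k-2}}(x+1)^{b_{2k-2}})$ is obtained by removing the full powers of $x$ and of $x+1$ from $A_{2k-2}$, hence has no linear factor. Being odd means $x\nmid A_{2k-1}$ and $(x+1)\nmid A_{2k-1}$, i.e. $A_{2k-1}(0)=1$ and $A_{2k-1}(1)=1$ in $\F_2$. Now evaluate $A_{2k}=1+MA_{2k-1}$ at $0$ and at $1$, using $M=x^2+x+1$ so that $M(0)=M(1)=1$: one gets $A_{2k}(0)=1+M(0)A_{2k-1}(0)=1+1=0$ and $A_{2k}(1)=1+M(1)A_{2k-1}(1)=1+1=0$. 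Hence $x\mid A_{2k}$ and $(x+1)\mid A_{2k}$, which is precisely $a_{2k}=val_x(A_{2k})\ge 1$ and $b_{2k}=val_{x+1}(A_{2k})\ge 1$. One should also check that $A_{2k}\neq 0$ so the valuations make sense: $MA_{2k-1}\neq 0$, and $MA_{2k-1}=1$ would force $\deg M=0$, contradicting $\deg M=2$; in the extreme case $A_{2k-1}=1$ one simply gets $A_{2k}=1+M=x(x+1)$, consistent with the asserted inequalities.

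I do not expect any genuine obstacle here. The only point requiring a little care is the bookkeeping that ensures $A_{2k-1}$ really is odd for every admissible $k\ge 1$, but this is immediate from the recursive construction (the even-indexed terms are multiplied by $M$ and shifted by $1$, and the odd-indexed terms are stripped of their $x$- and $(x+1)$-parts by definition). The argument is a one-line evaluation in $\F_2$ once that observation is in place.
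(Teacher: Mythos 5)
Your argument is correct and is essentially the paper's own proof: evaluate $A_{2k}=1+MA_{2k-1}$ at $t\in\{0,1\}$, using that $A_{2k-1}$ is odd and $M(0)=M(1)=1$, to get $A_{2k}(t)=0$ and hence $a_{2k},b_{2k}\ge 1$. The extra checks you include (nonnegativity of $a_0,b_0$ and $A_{2k}\neq 0$) are harmless refinements of the same one-line evaluation.
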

\begin{proof} If $k \geq 1$, then $x$ and $x+1$ both divide $A_{2k}$, because for $t \in \{0,1\}$, $A_{2k}(t) = 1+M(t) A_{2k-1}(t) = 1 + 1 = 0$.
\end{proof}
\begin{lemma} \label{lesdkellk} If $k \geq 1$, then
$$d_{2k+1} \leq d_{2k-1} \leq \deg(A),\ d_{2k} = d_{2k-1} + 2,\ d_{2k}=d_{2k+1} + a_{2k} + b_{2k}.$$
\end{lemma}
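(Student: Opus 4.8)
The plan is to obtain all three assertions by simply taking degrees in the two recurrence relations that define the Collatz transformations, feeding in Lemma~\ref{lesakbk} at the appropriate place. Before doing so I would record the harmless preliminary fact that every term $A_j$ of the sequences is nonzero: $A_1$ is odd, hence nonzero; if $A_{2k-1}\neq 0$ then $MA_{2k-1}$ has degree $d_{2k-1}+2\geq 2$, so $A_{2k}=1+MA_{2k-1}\neq 0$, and then $A_{2k+1}=A_{2k}/(x^{a_{2k}}(x+1)^{b_{2k}})\neq 0$; an immediate induction gives the claim. This guarantees that all the degrees $d_j$ are well-defined nonnegative integers.

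Next I would treat the two equalities. For $d_{2k}=d_{2k-1}+2$ with $k\geq 1$: since $\deg(MA_{2k-1})=\deg(M)+d_{2k-1}=d_{2k-1}+2\geq 2>0$, adding the constant $1$ cannot cancel the leading term of $MA_{2k-1}$, so $d_{2k}=\deg(1+MA_{2k-1})=\deg(MA_{2k-1})=d_{2k-1}+2$. For $d_{2k}=d_{2k+1}+a_{2k}+b_{2k}$: apply $\deg$ to both sides of the factorization $A_{2k}=x^{a_{2k}}(x+1)^{b_{2k}}A_{2k+1}$.

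Finally, for the inequalities: combining the two equalities just proved gives, for $k\geq 1$,
$$d_{2k+1}=d_{2k}-a_{2k}-b_{2k}=d_{2k-1}+2-(a_{2k}+b_{2k}),$$
and since $a_{2k}\geq 1$ and $b_{2k}\geq 1$ by Lemma~\ref{lesakbk}, we get $a_{2k}+b_{2k}\geq 2$ and hence $d_{2k+1}\leq d_{2k-1}$. To bound $d_{2k-1}$ by $\deg(A)$, note first that $d_1=\deg(A_1)=\deg(A)-a_0-b_0\leq\deg(A)$ from $A_0=x^{a_0}(x+1)^{b_0}A_1$; then iterating the inequality $d_{2j+1}\leq d_{2j-1}$ yields the chain $d_{2k-1}\leq d_{2k-3}\leq\cdots\leq d_1\leq\deg(A)$. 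The only point requiring a word of justification — the ``hard part'', such as it is — is the degree computation $\deg(1+MA_{2k-1})=d_{2k-1}+2$, i.e. the observation that the degree is governed by $MA_{2k-1}$ and is unaffected by the added constant; everything else is bookkeeping with degrees and Lemma~\ref{lesakbk}.
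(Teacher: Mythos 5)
Your proof is correct: the paper states this lemma without proof (``We obviously get the following lemmas''), and your argument is exactly the routine degree bookkeeping it has in mind — $\deg(1+MA_{2k-1})=d_{2k-1}+2$ since the added constant cannot affect the leading term, the factorization $A_{2k}=x^{a_{2k}}(x+1)^{b_{2k}}A_{2k+1}$ for the second equality, and Lemma~\ref{lesakbk} for the inequality. Nothing further is needed.
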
~\\
Since $(d_{2k+1})_k$ is a non-negative and non-increasing sequence, we obtain the
\begin{corollary} \label{ellCV}
The sequence $(d_{2k+1})_k$ and $(d_{2k})_k$ are both convergent.
One has~: $$\text{$\lim_k d_{2k+1} = p_1, \ \lim_k d_{2k} = p_2$ where $p_2= p_1 + 2$.}$$
\end{corollary}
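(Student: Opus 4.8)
The plan is to exploit the fact that $(d_{2k+1})_k$ and $(d_{2k})_k$ are sequences of \emph{integers}, so that ``convergent'' here really means ``eventually constant'': a monotone sequence of integers that is bounded (on the appropriate side) must stabilize after finitely many steps.

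First I would record what Lemma \ref{lesdkellk} already gives: for every $k \geq 1$ one has $d_{2k+1} \leq d_{2k-1}$, so the sequence $(d_{2k+1})_{k\geq 0}$ is non-increasing; moreover each $d_{2k+1} = \deg(A_{2k+1}) \geq 0$, since $A_{2k+1}$ is a nonzero (odd) polynomial. Hence $(d_{2k+1})_k$ is a non-increasing sequence of non-negative integers, so it is eventually constant: there is an index beyond which $d_{2k+1}$ equals a fixed value $p_1 \in \N$. This yields the first limit.

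Next I would feed this into the identity $d_{2k} = d_{2k-1} + 2$ from Lemma \ref{lesdkellk}: once $d_{2k-1}$ has settled to its eventual value $p_1$, the term $d_{2k}$ equals $p_1 + 2$. Thus $(d_{2k})_k$ is eventually constant as well, with limit $p_2 := p_1 + 2$, which is precisely the asserted relation $p_2 = p_1 + 2$.

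There is essentially no real obstacle; the only point deserving a moment's care is the remark that we are dealing with integer sequences, so monotonicity together with the lower bound $0$ gives not merely convergence in $\R$ but genuine eventual constancy — this is the form in which the statement will later be used to control the length $\ell_A$ in Theorem \ref{collatz0}. Everything else is a direct invocation of Lemmas \ref{lesakbk} and \ref{lesdkellk}.
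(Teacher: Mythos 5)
Your proposal is correct and follows essentially the same route as the paper: the paper derives the corollary directly from Lemma \ref{lesdkellk} by observing that $(d_{2k+1})_k$ is non-negative and non-increasing, then uses $d_{2k}=d_{2k-1}+2$ to get $p_2=p_1+2$. Your added remark that an integer sequence of this kind is in fact eventually constant is exactly what the paper records separately as Corollary \ref{stationnaire}, so nothing is missing.
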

\begin{corollary} \label{stationnaire}
There exists $m \geq 1$ such that for any $k \geq m$:
$$d_{2k+1} = p_1,\ d_{2k} = p_2,\ a_{2k} = b_{2k} = 1.$$
\end{corollary}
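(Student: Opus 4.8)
The plan is to combine Corollary \ref{ellCV} with the elementary observation that a convergent sequence of integers is eventually constant, and then to read off the values of $a_{2k}$ and $b_{2k}$ from the degree identities in Lemma \ref{lesdkellk}.

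First I would note that $(d_{2k+1})_k$ is a sequence of non-negative integers converging to $p_1$ (Corollary \ref{ellCV}); hence there exists an index $m_0 \geq 1$ such that $d_{2k+1} = p_1$ for all $k \geq m_0$. Next, for $k \geq m_0 + 1$ one has $k - 1 \geq m_0 \geq 1$, so the identity $d_{2k} = d_{2k-1} + 2$ from Lemma \ref{lesdkellk} gives $d_{2k} = d_{2(k-1)+1} + 2 = p_1 + 2 = p_2$, the last equality being Corollary \ref{ellCV}. Setting $m := m_0 + 1$, we thus obtain $d_{2k+1} = p_1$ and $d_{2k} = p_2$ for every $k \geq m$.

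Finally, for $k \geq m$ (in particular $k \geq 1$), the relation $d_{2k} = d_{2k+1} + a_{2k} + b_{2k}$ of Lemma \ref{lesdkellk} becomes $p_2 = p_1 + a_{2k} + b_{2k}$, whence $a_{2k} + b_{2k} = p_2 - p_1 = 2$. Since $a_{2k} \geq 1$ and $b_{2k} \geq 1$ by Lemma \ref{lesakbk}, the only possibility is $a_{2k} = b_{2k} = 1$, which completes the argument.

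I do not anticipate any genuine difficulty here; the only point requiring a little care is the bookkeeping of the index shift, so that each application of Lemma \ref{lesdkellk} is made for $k \geq 1$ — this is precisely why $m$ is chosen one step beyond the stabilization index $m_0$ of the sequence $(d_{2k+1})_k$.
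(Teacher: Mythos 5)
Your argument is correct and follows essentially the same route as the paper: the paper's proof simply notes that the convergent integer sequence $(d_{2k+1})_k$ is eventually constant and leaves the deductions $d_{2k}=p_2$ and $a_{2k}=b_{2k}=1$ (via Lemmas \ref{lesdkellk} and \ref{lesakbk}) implicit, which you have spelled out explicitly and accurately.
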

\begin{proof}
The convergent sequence $(d_{2k+1})_k$ takes its values in the finite set $\{0,1,\ldots, \deg(A)\}$. So, it is eventually constant.
\end{proof}
\begin{corollary} \label{samedegrees}
For any $k \geq m$, the polynomials $A_{2k}$ and $A_{2k+1}$ are respectively of degree $p_2$ and $p_1$.
\end{corollary}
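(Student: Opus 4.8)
The plan is to observe that this statement is nothing more than a translation of Corollary~\ref{stationnaire} back into the language of the polynomials themselves rather than of their degrees. Recall that immediately before Lemma~\ref{lesdkellk} we set $d_{2k}:=\deg(A_{2k})$ and $d_{2k+1}:=\deg(A_{2k+1})$ for all $k\ge 0$. Corollary~\ref{stationnaire} provides an integer $m\ge 1$ such that $d_{2k}=p_2$ and $d_{2k+1}=p_1$ for every $k\ge m$. Re-reading these two equalities through the definitions of $d_{2k}$ and $d_{2k+1}$ yields exactly $\deg(A_{2k})=p_2$ and $\deg(A_{2k+1})=p_1$ for all $k\ge m$, which is precisely the assertion.

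Consequently there is essentially no obstacle: the proof is a one-line dereferencing of notation, and the only point requiring (trivial) care is that the threshold $m$ here is literally the one manufactured in Corollary~\ref{stationnaire}, so that no re-indexing is needed. It is worth noting why the statement is isolated as a separate corollary: the subsequent part of the argument (toward Theorem~\ref{collatz0}) will need to manipulate the polynomials $A_{2k}$ and $A_{2k+1}$ directly — in particular to force $p_2=2$ and $p_1=0$ — so it is convenient to have their eventual degrees recorded as a standalone fact. In short, I would simply write: ``Immediate from Corollary~\ref{stationnaire} together with the definitions $d_{2k}=\deg(A_{2k})$ and $d_{2k+1}=\deg(A_{2k+1})$'', and then pass on to the genuinely substantive step, namely pinning down the limit value $p_1$ (equivalently, showing the stationary odd polynomial is constant).
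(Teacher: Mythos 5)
Your proposal is correct and matches the paper, which indeed states this corollary without further argument as an immediate restatement of Corollary~\ref{stationnaire} via the definitions $d_{2k}=\deg(A_{2k})$ and $d_{2k+1}=\deg(A_{2k+1})$. Nothing more is needed.
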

\begin{corollary} \label{cyclic}
There exists a positive integer $t \leq \deg(A)$ such that the polynomials $A_{2(m+t)} = A_{2m}$ and $A_{2(m+t)+1} =A_{2m+1}$.
\end{corollary}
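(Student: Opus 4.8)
The plan is to show that, from index $m$ onward, the odd polynomials $A_{2k+1}$ are produced by iterating a single fixed polynomial map, and that this map becomes transparent after the substitution $S\mapsto S+1$. First I would collect the consequences of Corollaries \ref{stationnaire} and \ref{samedegrees}: for every $k\ge m$ one has $a_{2k}=b_{2k}=1$, so $A_{2k}=x(x+1)A_{2k+1}$, and each $A_{2k+1}$ is odd of the constant degree $p_1$. Unwinding one Collatz step, for $k\ge m$,
$$A_{2(k+1)+1}=\frac{A_{2(k+1)}}{x(x+1)}=\frac{1+MA_{2k+1}}{x(x+1)}=:g(A_{2k+1}),$$
so $g$ maps the finite set $\mathcal C:=\{A_{2k+1}:k\ge m\}\subseteq\mathcal O_{p_1}$ into itself. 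It is injective, since $g(S)=g(T)$ forces $MS=MT$, hence $S=T$; thus $g$ permutes the finite set $\mathcal C$ and the $g$-orbit of $A_{2m+1}$ is a genuine cycle. Therefore there is a least $t\ge 1$ with $A_{2(m+t)+1}=A_{2m+1}$, and then $A_{2(m+t)}=x(x+1)A_{2(m+t)+1}=x(x+1)A_{2m+1}=A_{2m}$. This already gives cyclicity; what remains is the bound on $t$.

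The crux is the identity $M=1+x(x+1)$, which I would use to compute
$$g(S)+1=\frac{1+MS+x(x+1)}{x(x+1)}=\frac{M+MS}{x(x+1)}=\frac{M}{x(x+1)}\,(S+1).$$
Iterating from $k=m$ yields $A_{2(m+i)+1}+1=\bigl(M/(x(x+1))\bigr)^{i}(A_{2m+1}+1)$ for all $i\ge 0$. Since $\gcd(M,x(x+1))=1$ and each $A_{2(m+i)+1}+1$ is a polynomial, $(x(x+1))^{i}$ divides $A_{2m+1}+1$ for every $i\ge 0$; but a nonzero polynomial is divisible by only finitely many powers of $x(x+1)$, so $A_{2m+1}+1=0$. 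Hence $A_{2m+1}=1$, $A_{2m}=x(x+1)$, and then $A_{2(m+1)}=1+M=x(x+1)=A_{2m}$ while $A_{2(m+1)+1}=1=A_{2m+1}$, so the corollary holds with $t=1\le\deg A$. (If one prefers to keep only the soft route, the same identity forces $(x(x+1))^{t-1}\mid A_{2m+1}+1$; then $t=1$ when $A_{2m+1}=1$, and otherwise $2(t-1)\le\deg(A_{2m+1}+1)=p_1\le\deg A$ by Lemma \ref{lesdkellk}, which is the stated bound.)

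I expect the only real difficulty to be spotting and exploiting the identity $M=1+x(x+1)$: without it, the naive pigeonhole argument on $\mathcal O_{p_1}$ gives only $t\le 2^{p_1-2}$, far weaker than $\deg A$. Everything else is bookkeeping — unwinding the recursion to obtain $g$, the one-line injectivity of $g$, and the elementary divisibility step — plus dispatching the degenerate low-degree cases ($\deg A\le 1$, and in particular $A=1$, where the sequence already sits at the fixed point) directly by hand.
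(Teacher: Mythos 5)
Your argument is correct, but it takes a genuinely different (and stronger) route than the paper. The paper's proof of this corollary is a bare pigeonhole: for $k\ge m$ the polynomials $A_{2k+1}$ (resp.\ $A_{2k}$) all lie in the finite set of polynomials of degree $p_1$ (resp.\ $p_2$), hence a repetition occurs; the identification of the cycle as $A_{2m}=x^2+x$, $A_{2m+1}=1$ is postponed to Proposition \ref{lesAjodd=1}, which the paper proves by setting up a circulant system and computing $\det(C)=M^t+(1+M)^t\neq 0$. You instead observe that for $k\ge m$ the step is the single map $g(S)=(1+MS)/(x(x+1))$, note its injectivity (which is what actually guarantees the period starts at $m$ itself -- a point the paper's one-line proof glosses over), and then exploit the identity $M=1+x(x+1)$ to get $g(S)+1=\frac{M}{x(x+1)}(S+1)$; iterating forces $(x(x+1))^i\mid A_{2m+1}+1$ for all $i$, hence $A_{2m+1}=1$ and $t=1$. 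This conjugation trick buys you Proposition \ref{lesAjodd=1} for free (no determinant computation) and yields an honest bound $t=1\le\deg A$, whereas the paper's pigeonhole as written only gives $t\le\#\mathcal{O}_{p_1}=2^{p_1-2}$ and never actually justifies the stated bound $t\le\deg(A)$. The only blemish is the degenerate case $\deg A=0$ (e.g.\ $A=1$), where the inequality $t\le\deg A$ in the statement itself cannot hold for a positive $t$; that defect is in the paper's formulation and affects both proofs equally, and your remark about handling low-degree cases by hand does not remove it.
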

\begin{proof} For any $k \geq m$, the polynomial $A_{2k}$ (resp. $A_{2k+1}$) lies in the finite set of polynomials of degree $p_2$ (resp. $p_1$).
\end{proof}
\begin{proposition} \label{lesAjodd=1}
For any $k \geq m$, $A_{2k+1} = 1$ so that $p_1 = 0$ and $t = 1$.
\end{proposition}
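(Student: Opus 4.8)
\emph{Plan of proof.} The idea is to exploit the cyclic structure provided by Corollary~\ref{cyclic} together with the fact that, for $k \ge m$, the Collatz step is completely rigid. Indeed, by Corollary~\ref{stationnaire} we have $a_{2k} = b_{2k} = 1$ for $k \ge m$, so (since $A_{2k+1}$ is odd) $A_{2k} = x(x+1)A_{2k+1}$, while by construction $A_{2k+2} = 1 + M A_{2k+1}$. Writing $u := x(x+1) = x^2+x$ and noting that $M = u+1$, these two relations combine, for every $k \ge m$, into the single recursion
$$u\, A_{2k+3} = 1 + M\, A_{2k+1}.$$

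First I would set $B_i := A_{2(m+i)+1}$ for $i \ge 0$, so that $u B_{i+1} = 1 + M B_i$, and by Corollary~\ref{cyclic} there is an integer $t \ge 1$ with $B_t = B_0$. A straightforward induction on $n$ then gives the closed form
$$u^{\,n} B_n = \sum_{j=0}^{n-1} u^{\,n-1-j} M^{\,j} \;+\; M^{\,n} B_0 ,$$
the base case $n=1$ being exactly the recursion above. Taking $n = t$ and using $B_t = B_0$, all signs being irrelevant in characteristic $2$, this becomes
$$\bigl(u^{\,t} + M^{\,t}\bigr) B_0 \;=\; \sum_{j=0}^{t-1} u^{\,t-1-j} M^{\,j}.$$

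The crucial point is that the right-hand side collapses. From the polynomial identity $a^{t} - b^{t} = (a-b)\sum_{j=0}^{t-1} a^{t-1-j} b^{j}$, applied with $a = u$ and $b = M$ and using $a - b = u - (u+1) = 1$, we get $\sum_{j=0}^{t-1} u^{\,t-1-j} M^{\,j} = u^{\,t} + M^{\,t}$. Hence $\bigl(u^{\,t} + M^{\,t}\bigr)(B_0 + 1) = 0$. Since $u^{\,t}$ has zero constant term and $M^{\,t}$ has constant term $1$, the polynomial $u^{\,t} + M^{\,t}$ is nonzero, so $B_0 = 1$, i.e. $A_{2m+1} = 1$ and $p_1 = \deg(A_{2m+1}) = 0$. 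Then $A_{2m+2} = 1 + M = x^2 + x$ and $A_{2m+3} = 1 = A_{2m+1}$, so the cycle closes after a single step, giving $t = 1$; an immediate induction of the same kind then yields $A_{2k+1} = 1$ for all $k \ge m$.

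I do not expect a serious obstacle here: the only things to watch are the index bookkeeping, so that the rigid relations $a_{2k} = b_{2k} = 1$ are invoked only in the range $k \ge m$ where Corollary~\ref{stationnaire} applies (which is precisely the range needed), and the verification that $u^{\,t} + M^{\,t} \neq 0$, which legitimizes the final cancellation.
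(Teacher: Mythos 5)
Your proof is correct, and it reaches the same pivotal quantity as the paper --- the non-vanishing of $M^t+(1+M)^t$ (your $u^t+M^t$, since $u=1+M$) --- but by a different route. The paper assembles the relations $M A_{2k+1}+(1+M)A_{2k+3}=1$ for $k=m,\dots,m+t-1$ into a $t\times t$ linear system over $\F_2[x]$ whose matrix is circulant with first row $[M,\,1+M,\,0,\dots,0]$, computes $\det = M^t+(1+M)^t\neq 0$, and concludes that the evident solution $(1,\dots,1)$ is the unique one. You instead iterate the single recursion $uB_{i+1}=1+MB_i$ to a closed form, impose the cyclicity $B_t=B_0$ from Corollary~\ref{cyclic}, and collapse the geometric sum via $u+M=1$ in characteristic $2$ to get the factorization $(u^t+M^t)(B_0+1)=0$, whence $B_0=1$ directly. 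Your version is somewhat more self-contained: it avoids quoting the circulant determinant formula and the uniqueness argument, it does not need to guess that the all-ones vector solves the system, and it makes explicit (via the constant-term argument) why $u^t+M^t\neq 0$, a point the paper merely asserts. The index bookkeeping is also handled correctly: you only use $a_{2k}=b_{2k}=1$ for $k\geq m$, which is exactly what Corollary~\ref{stationnaire} provides, and the final step $A_{2m+1}=1\Rightarrow A_{2m+2}=x^2+x\Rightarrow A_{2m+3}=1$ gives $t=1$ and propagates to all $k\geq m$ just as in the paper.
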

\begin{proof}
For $k \geq m$, $a_{2k} = b_{2k} = 1$, so the Collatz transformations give:
$$\left\{\begin{array}{l}
M A_{2m+1} + (1+M) A_{2m+3} = 1\\
M A_{2m+3} + (1+M) A_{2m+5} = 1\\
\vdots\\
M A_{2m+2t-3} + (1+M) A_{2m+2t-1} = 1\\
M A_{2m+2t-1} + (1+M) A_{2m+2t+1} = 1
\end{array} \right.$$
Since $A_{2m+2t+1} = A_{2m+1}$, we get a linear system of $t$ equations with coefficients in $\F_2[x]$ and
$t$ unknowns: $A_{2m+1},\ldots, A_{2m+2t-1}$. Its matrix $C$ is circulant with first line: $[M, 1+M, 0,\ldots, 0]$. The second member is the transpose of $[1 \ldots 1]$.

By expanding along the first column of $C$, we see that $$\det(C) = {M}^t + (1+M)^t,$$ which is nonzero.

Thus, this system admits a unique $t$-tuple solution which is $(1,\ldots, 1)$.
\end{proof}
\begin{corollary} \label{casell=0}
The even and odd sequences are respectively:
$$[A_2,\ldots, A_{2m-2}, x^2+x], \ [A_1,\ldots, A_{2m-1},1].$$
Moreover, they are of length $m+1 \leq 2^{\deg(A)-1}$.
\end{corollary}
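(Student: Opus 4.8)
The plan is to identify the terminal segment of the two sequences from Proposition~\ref{lesAjodd=1} and Corollary~\ref{stationnaire}, and then to bound the common length $\ell_A = m+1$ by counting odd polynomials by means of Lemma~\ref{oddpolyn}.

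I would first pin down the last terms. By Proposition~\ref{lesAjodd=1}, $A_{2k+1} = 1$ for every $k \geq m$; by Corollary~\ref{stationnaire}, $a_{2k} = b_{2k} = 1$ for every $k \geq m$. Substituting both into $A_{2k} = x^{a_{2k}}(x+1)^{b_{2k}} A_{2k+1}$ gives $A_{2k} = x(x+1) = x^2+x$ for every $k \geq m$; in particular $A_{2m} = x^2+x$. Combining $A_{2m} = 1 + M A_{2m-1}$ with $A_{2m} = x^2+x$ and $1 + (x^2+x) = x^2+x+1 = M$ gives $M A_{2m-1} = M$, hence $A_{2m-1} = 1$. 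Since under the Collatz transformations $x^2+x \mapsto 1 \mapsto x^2+x$, the even (resp.\ odd) sequence is constant equal to $x^2+x$ (resp.\ to $1$) from that point on, so truncating at the onset of this loop produces exactly the stated lists, and $\ell_A = m+1$.

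For the inequality I would take $m$ minimal and look at the odd iterates $A_1, A_3, \ldots, A_{2m-1}$. The key point is that for $k \geq 1$ the term $A_{2k+1}$ is the odd part of $1 + M A_{2k-1}$, hence a deterministic function $f$ of $A_{2k-1}$. If $B$ is odd, then $B(0) = B(1) = 1$, so $(1+MB)(0) = (1+MB)(1) = 0$, whence $x(x+1) \mid 1+MB$ and therefore $\deg f(B) \leq \deg(1+MB) - 2 = \deg B$; since also $\deg A_1 \leq \deg A_0 = \deg(A)$, every $A_{2k+1}$ is an odd polynomial of degree at most $\deg(A)$. Moreover these iterates cannot repeat before reaching $1$: an equality $A_{2i+1} = A_{2j+1}$ with $i < j < m$ would make the odd sequence periodic from index $2i+1$, hence (being eventually $1$) identically $1$ from that index on, and one checks this forces the stationarity conditions of Corollary~\ref{stationnaire} already from $i+1$, contradicting the minimality of $m$. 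So $A_1, \ldots, A_{2m-1}$ are pairwise distinct odd polynomials of degree $\leq \deg(A)$, and Lemma~\ref{oddpolyn}(ii) gives that there are only $1 + \sum_{d=2}^{\deg(A)} 2^{d-2} = 2^{\deg(A)-1}$ such polynomials; this bounds $m$, and so $\ell_A = m+1$, by $2^{\deg(A)-1}$.

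The computations in the first two steps are routine. The part I expect to require real care is the last one: giving a clean argument that no odd iterate recurs before the sequence hits $1$ — this is exactly where the minimality of $m$ enters, together with the fact that $1$ is a fixed point of $f$ — and matching the count from Lemma~\ref{oddpolyn} to the target bound $2^{\deg(A)-1}$ without miscounting the terminal term.
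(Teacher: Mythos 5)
Your first step (pinning down $A_{2k}=x^2+x$ and $A_{2k+1}=1$ for $k\ge m$) matches the paper's, and your insistence on proving that the odd iterates cannot repeat before reaching $1$ is a genuine improvement in rigor: the paper's own proof merely lists ``all odd polynomials of degree $\deg(A)$, $\deg(A)-1,\dots$, $x^2+x+1$ and $1$'' and tacitly assumes the $m+1$ listed terms are pairwise distinct, and your periodicity/fixed-point argument is exactly the missing justification.

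The final counting step, however, has a genuine gap, and it interacts badly with your choice of $m$. You take $m$ minimal for the stationarity property of Corollary \ref{stationnaire}; for that $m$ you correctly deduce $A_{2m-1}=1$, and your distinctness argument shows that $A_1,A_3,\dots,A_{2m-1}$ are $m$ distinct odd polynomials of degree at most $\deg(A)$. Lemma \ref{oddpolyn} then yields only $m\le 2^{\deg(A)-1}$, hence $\ell_A=m+1\le 2^{\deg(A)-1}+1$; the sentence ``this bounds $m$, and so $\ell_A=m+1$, by $2^{\deg(A)-1}$'' is an off-by-one non sequitur. Worse, with your $m$ the claimed inequality is actually false, so no refinement of the count can close the gap: for $A=M=x^2+x+1$ one has $A_1=M$, $A_2=1+M^2=x^2(x+1)^2$ (so $a_2=b_2=2$), $A_3=1$, $A_4=x^2+x$, whence the minimal stationarity index is $m=2$ and $m+1=3>2=2^{\deg(A)-1}$. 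To obtain the stated bound you must instead take $m$ to be the first index with $A_{2m+1}=1$ (which is what the paper's count implicitly uses); then $A_1,\dots,A_{2m-1}$ are not only pairwise distinct but all different from $1$ (same argument: a repetition or an earlier $1$ forces the tail to equal $1$ earlier), so together with the terminal $1$ you get $m+1$ distinct odd polynomials of degree $\le\deg(A)$ and hence $m+1\le 2^{\deg(A)-1}$. (With that indexing the last even term is $x^{a_{2m}}(x+1)^{b_{2m}}$, not necessarily $x^2+x$ --- the paper is itself loose about which $m$ it means --- but your write-up, by proving $A_{2m-1}=1$ and then asserting $m+1\le 2^{\deg(A)-1}$ for the same $m$, claims two things that cannot both hold, as the example $A=M$ shows.)
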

\begin{proof}
We have just seen that $p_1 = 0$ and $t=1$. So, $p_2 = 2$, $A_{2m+1} = 1$ and $A_{2m} = x^2+x$.
The odd sequence contains at most:
$$\begin{array}{l}
\text{- all odd polynomials of degree $\deg(A)$}, \\
\text{- all odd polynomials of degree $\deg(A)-1$}\\
\vdots\\
\text{- the polynomials $x^2+x+1$ and $1$.}
\end{array}$$
Thus, by Lemma \ref{oddpolyn}-ii), one has $$m+1 \leq 2^{\deg(A)-2}+2^{\deg(A)-3} +\cdots+2+1+1 =2^{\deg(A)-1}.$$
\end{proof}
\section{Examples and ``Conceivable'' facts} \label{exemples}
In this section, we determine the lengths of Collatz (odd polynomials) sequences for several families. In each example, we only give the sequence of their degrees. We recall that $M:=x^2+x+1$ (the first odd and non-constant polynomial).
\subsection{Family $\{M^{2^r}+ \cdots+ M+1: r \geq 1 \}$} \label{cas0}
\begin{lemma} \label{lemmecas1j=0}
If $A = M^{2^r}+ \cdots+ M+1$ with $r \geq 1$, then for any $0 \leq k \leq 2^r-2$, $A_{2k+1} = 1+M^{k+1}(M+1)^{2^r-k-1}$, $\deg(A_{2k+1}) = \deg(A)$ and $A_{2(2^r-1)+1} = 1$. The length ${\ell}_A$ equals $2^r$.
\end{lemma}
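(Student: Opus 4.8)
The plan is to compute the Collatz sequence explicitly for $A = M^{2^r} + \cdots + M + 1$ by recognizing the telescoping structure of this geometric-type sum over $\F_2[x]$. First I would observe that $A$ is already odd (each summand $M^j$ is odd, and $\deg(A) = 2^{r+1}$, with $A(0) = A(1) = 1$ since each $M(t) = 1$ contributes an odd number $2^r+1$ of ones), so $A_1 = A$ and $a_0 = b_0 = 0$. The key algebraic fact is the identity $(M+1) A = M^{2^r+1} + 1$ in $\F_2[x]$, obtained from $(M+1)(M^{2^r} + \cdots + 1) = M^{2^r+1} + \cdots + M + M^{2^r} + \cdots + 1 = M^{2^r+1} + 1$ after cancellation in characteristic $2$. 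Since $M+1 = x^2+x$ and $M^{2^r+1}+1 = 1 + M \cdot M^{2^r}$, this already matches $A_2 = 1 + M A_1$ with $A_2 = 1 + M^{2^r+1}$, and dividing out $x^{a_2}(x+1)^{b_2}$ we get $A_3$; the claim $A_{2k+1} = 1 + M^{k+1}(M+1)^{2^r - k - 1}$ for $k=0$ reads $A_1 = 1 + M(M+1)^{2^r-1}$, which must be checked to coincide with the original sum — indeed $1 + M(M+1)^{2^r-1} = 1 + M \cdot \frac{(M+1)^{2^r}-1}{(M+1)+1}$... more cleanly, one verifies $(M+1)^{2^r} + 1 = M \cdot \big( \text{something} \big)$ is not quite it, so I would instead prove the formula for all $k$ simultaneously by induction.

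The inductive step is the heart of the argument. Assume $A_{2k+1} = 1 + M^{k+1}(M+1)^{2^r-k-1}$ for some $0 \le k \le 2^r - 3$ (with $\deg = 2^{r+1}$). Then
$$A_{2k+2} = 1 + M A_{2k+1} = 1 + M + M^{k+2}(M+1)^{2^r-k-1} = (M+1) + M^{k+2}(M+1)^{2^r-k-1} = (M+1)\big(1 + M^{k+2}(M+1)^{2^r-k-2}\big).$$
So $M+1 = x(x+1)$ divides $A_{2k+2}$ exactly once more than it divides the odd-part factor (which, being a polynomial with nonzero constant term $1$, is odd: one must check $1 + M^{k+2}(M+1)^{2^r-k-2}$ evaluated at $0$ and $1$ is $1$, true since $M(0)=M(1)=1$ and $(M+1)(0)=(M+1)(1)=0$ for exponent $\ge 1$, i.e. when $2^r-k-2 \ge 1$). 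Hence $a_{2k+2} = b_{2k+2} = 1$ and $A_{2k+3} = 1 + M^{k+2}(M+1)^{2^r-k-2}$, which is the formula with $k$ replaced by $k+1$. The degree stays $\deg(M^{k+2}(M+1)^{2^r-k-2}) = 2(k+2) + 2(2^r-k-2) = 2^{r+1} = \deg(A)$, so $\deg(A_{2k+3}) = 2^{r+1} = \deg(A)$ as claimed. Iterating up to $k = 2^r - 2$ gives $A_{2(2^r-2)+1} = 1 + M^{2^r-1}(M+1)$, and one more step yields $A_{2(2^r-1)} = (M+1)(1 + M^{2^r}(M+1)^{-1}+ \ldots)$ — more carefully, $A_{2(2^r-2)+2} = (M+1)(1 + M^{2^r}(M+1)^0) = (M+1)(1 + M^{2^r})= (M+1)(1+M)^{2^r} = (M+1)^{2^r+1}$... this needs care since $1 + M^{2^r} = (1+M)^{2^r}$ in characteristic $2$; then $A_{2(2^r-1)} = (x(x+1))^{2^r+1}/(x^{a}(x+1)^{b})$-type reduction gives $A_{2(2^r-1)+1}$.

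I would finish by handling the last step separately: from $A_{2(2^r-2)+1} = 1 + M^{2^r - 1}(M+1)$ we get $A_{2(2^r-1)} = 1 + M^{2^r}(M+1) = (M+1) + M^{2^r}(M+1) + \text{(wait, recompute)} = 1 + M + M^{2^r+1} + M^{2^r}$; grouping, $= (1+M^{2^r}) + M(1 + M^{2^r}) = (1+M)(1+M^{2^r}) = (1+M)^{2^r+1} = (x^2+x)^{2^r+1}$, a pure power of $x(x+1)$, so $A_{2(2^r-1)+1} = 1$ and $A_{2(2^r-1)} / (x^{2^r+1}(x+1)^{2^r+1})$ with $a_{2(2^r-1)} = b_{2(2^r-1)} = 2^r+1$; but Corollary~\ref{stationnaire} then forces the stationary regime, and by Corollary~\ref{casell=0} the sequence terminates there, giving ${\ell}_A = (2^r - 1) + 1 = 2^r$. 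The main obstacle I anticipate is purely bookkeeping: correctly tracking the exponents of $M$ and $M+1$ through each $1 + M(\cdot)$ step and verifying at each stage that the cofactor $1 + M^{j}(M+1)^{2^r - j}$ is genuinely odd (nonzero at $0$ and $1$) so that $a_{2k} = b_{2k} = 1$ exactly — a subtlety only when the exponent of $M+1$ hits $0$, which is precisely the terminating step where the behaviour changes and the factor $(1+M)^{2^r+1}$ appears. No deep idea beyond the telescoping identity $(M+1)A = M^{2^r+1}+1$ and the Frobenius relation $1 + M^{2^r} = (1+M)^{2^r}$ is needed.
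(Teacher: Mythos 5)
Your proposal follows essentially the same route as the paper: prove by induction on $k$ that $A_{2k+1}=1+M^{k+1}(M+1)^{2^r-k-1}$, using the factorization $A_{2k+2}=(M+1)\bigl(1+M^{k+2}(M+1)^{2^r-k-2}\bigr)$ together with the oddness of the cofactor as long as $2^r-k-2\geq 1$, then treat the last step separately and count the terms. Your inductive step, your terminal computation $A_{2m+2}=(1+M)(1+M^{2^r})=(1+M)^{2^r+1}$ (this exponent is in fact the correct one; the paper's displayed $(1+M)^{2^r}$ is a slip that does not affect the conclusion), and the count ${\ell}_A=(2^r-1)+1=2^r$ are exactly the paper's argument.

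Two points in your first paragraph need repair, though neither affects the core of the proof. First, $A_2=1+MA_1=1+M+M^2+\cdots+M^{2^r+1}$, not $1+M^{2^r+1}$: you conflated the Collatz step $1+MA$ with the product $(1+M)A=M^{2^r+1}+1$. This assertion is never used in your induction, so it should simply be deleted. Second, the base case $A=1+M(M+1)^{2^r-1}$ cannot be sidestepped by ``proving the formula for all $k$ simultaneously by induction''; an induction on $k$ needs precisely this identity at $k=0$. It is, however, immediate: from $(M+1)^{2^r}=M^{2^r}+1$ one gets $(M+1)^{2^r-1}=(M^{2^r}+1)/(M+1)=M^{2^r-1}+\cdots+M+1$, hence $1+M(M+1)^{2^r-1}=1+M+\cdots+M^{2^r}=A$ (this is the paper's ``we easily see''). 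With these repairs your argument is complete and coincides with the paper's proof.
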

\begin{proof}
First, for $0 \leq k \leq 2^r-2$, $A_{2k+1}$ is odd and $\deg(A_{2k+1}) = \deg(A)$. We proceed by induction on $k$. If $k=0$, then $A_1 = A$ because $A$ is odd. We easily see that $A= 1+M(M+1)^{2^r-1}$. Suppose that $A_{2k+1} = 1+M^{k+1}(M+1)^{2^r-k-1}$ and prove that $A_{2k+3} = 1+M^{k+2}(M+1)^{2^r-k-2}$, for $k+1 \leq 2^r-2$. We get
$A_{2k+2} = 1+MA_{2k+1} = \cdots= (1+M)[1+M^{k+2}(M+1)^{2^r-k-2}]$ with $2^r-k-2 \geq 1$. So, $1+M^{k+2}(M+1)^{2^r-k-2}$ is odd and $A_{2k+3} = 1+M^{k+2}(M+1)^{2^r-k-2}$.\\
Now, for $k=2^r-2=m$, one has $A_{2m+2} = 1+MA_{2m+1} = \cdots =(1+M)^{2^r}$ and $A_{2m+3}=1$. So, the length ${\ell}_A$ of $[A_1, \ldots, A_{2(2^r-2)+1}, 1]$ equals $m+1 + 1 = (2^r-1) + 1 = 2^r$.
\end{proof}
\subsection{Family $\{(M^{2^{r}}+ \cdots+ M+1)^{2^u}: r \geq 1, u \geq 1\}$} \label{casM2r}
\begin{lemma} \label{lemmecasM2r}
If $A=(M^{2^{r}}+ \cdots+ M+1)^{2^u}$, then ${\ell_A} = 2^u \cdot (2^r-1) + 1$.
\end{lemma}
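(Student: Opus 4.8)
The goal is to determine the Collatz length of $A=(M^{2^r}+\cdots+M+1)^{2^u}$. The plan is to exploit the Frobenius (squaring) endomorphism on $\F_2[x]$ together with the result already obtained in Lemma \ref{lemmecas1j=0} for the base polynomial $B:=M^{2^r}+\cdots+M+1$. The key observation is that squaring is compatible with the Collatz transformations up to the appearance of linear factors: over $\F_2$ one has $(1+MS)^2 = 1+M^2 S^2$, and $M^2=x^4+x^2+1$ is not quite $1+M\cdot(\text{something odd})$, so the match is not immediate. Instead I would track what happens at each step very concretely.

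First I would recall from the proof of Lemma \ref{lemmecas1j=0} the exact shape of the odd iterates of $B$: for $0\le k\le 2^r-2$ one has $B_{2k+1}=1+M^{k+1}(M+1)^{2^r-k-1}$, and at each step the even term factors as $B_{2k+2}=(1+M)\cdot B_{2k+3}$, i.e. exactly one factor of $(x+1)$ is stripped off (and no factor of $x$), except at the very last step where $B_{2m+2}=(1+M)^{2^r}$ collapses all the way to $1$ after removing $2^r$ factors of $(x+1)$. Squaring everything in sight, set $A=B^{2^u}$ and $A_1=B^{2^u}$ (since $A$ is odd). Then $A_2=1+MA_1=1+MB^{2^u}$. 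The heart of the argument is to show that applying the Collatz map to $A$ reproduces, after $2^u$ steps, the polynomial $(B_3)^{2^u}$ — in other words, one full ``squared'' Collatz step of $B$ costs $2^u$ actual Collatz steps for $A$, because each linear factor $(x+1)$ that was stripped once from $B$ must now be stripped $2^u$ times from $B^{2^u}$, but the intermediate polynomials $1+MB^{2^u}$, etc., are not themselves $2^u$-th powers and must be handled individually.

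So the technical core is a sub-lemma: starting from an odd polynomial of the form $1+M^{j}(M+1)^{N}$ raised to the power $2^u$ (with $N\ge 1$), determine precisely how many Collatz steps are needed before the exponent pattern advances to $1+M^{j+1}(M+1)^{N-1}$ raised to the $2^u$. I would compute $1+M\cdot\big(1+M^{j}(M+1)^{N}\big)^{2^u}$ and factor out its linear part; the exponent of $(x+1)$ in this even polynomial should be exactly $2^u$ (matching the single factor in the unsquared case, doubled $u$ times), and after removing it one lands on $\big(1+M^{j+1}(M+1)^{N-1}\big)^{2^u}$ — this uses the identity $(1+M)^{2^u}\big(1+M^{j+1}(M+1)^{N-1}\big)^{2^u} = \big((1+M)(1+M^{j+1}(M+1)^{N-1})\big)^{2^u} = (1+MA_{\text{prev}})^{2^u}$, which is where the Frobenius does the real work. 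One must also check the valuation at $x$ stays $0$ until the final collapse, mirroring Lemma \ref{lesakbk} and the base case. Counting: $B$ has length $\ell_B=2^r$ by Lemma \ref{lemmecas1j=0}, meaning $m_B=2^r-1$ odd iterates before reaching $1$; each of these $2^r-1$ ``macro-steps'' becomes $2^u$ genuine steps for $A$, and then the terminal step to $x^2+x$ and $1$ adds the final $+1$ to the length. This yields $\ell_A = 2^u\cdot(2^r-1)+1$.

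The main obstacle I anticipate is the bookkeeping in the sub-lemma: verifying that the intermediate iterates between the ``clean'' powers $\big(1+M^{j}(M+1)^{N}\big)^{2^u}$ really do strip off exactly one linear factor at a time and never acquire a factor of $x$, so that exactly $2^u$ steps separate consecutive clean powers. A clean way around it is to prove directly by induction that for $0\le i< u$ the iterate after $i$ further steps from $\big(1+M^{j}(M+1)^{N}\big)^{2^u}$ equals $\big(1+M^{j}(M+1)^{N}\big)^{2^u-2^i}\cdot\big(1+M^{j+1}(M+1)^{N-1}\big)^{2^i}$ — a ``mixed power'' interpolating between the two clean forms — and that multiplying by $M$ and adding $1$ removes precisely one factor $(x+1)$ from it; the binomial-in-characteristic-2 identity $(1+M)\cdot\big(1+M^{j+1}(M+1)^{N-1}\big) = 1+M^{j}(M+1)^{N}\cdot(\text{correction})$ must be checked to make the induction close, but since everything is an explicit polynomial identity over $\F_2$ this is a finite verification. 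Once the sub-lemma is in place, the length count is immediate and the degree sequence (all odd iterates of degree $\deg A = 2^u(2^{r+1})$, say) follows from Corollary \ref{samedegrees} and the explicit forms.
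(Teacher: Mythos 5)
Your macro-level picture --- that $2^u$ Collatz steps of $A=B^{2^u}$, with $B=M^{2^r}+\cdots+M+1$, carry $(B_{2k+1})^{2^u}$ to $(B_{2k+3})^{2^u}$, so that the $2^r-1$ odd iterates of $B$ contribute $2^u(2^r-1)$ steps plus the terminal one --- is correct, and so is the final count. But both concrete formulations you offer for the ``technical core'' sub-lemma are false, and the first one contradicts your own count. Writing $P=1+M^{j}(M+1)^{N}$ with $j,N\ge 1$, one has
$$1+MP^{2^u}=1+M+M^{2^uj+1}(M+1)^{2^uN}=(1+M)\bigl(1+M^{2^uj+1}(M+1)^{2^uN-1}\bigr),$$
and the last factor is odd; since $1+M=x(x+1)$, the even iterate is divisible exactly once by $x$ and once by $x+1$ (it is never free of the factor $x$, contrary to your aside --- compare Lemma \ref{lesakbk}). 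So the valuation of $x+1$ is $1$, not $2^u$, and the next odd iterate is $1+M^{2^uj+1}(M+1)^{2^uN-1}$, not $\bigl(1+M^{j+1}(M+1)^{N-1}\bigr)^{2^u}$. The identity you invoke computes $(1+MP)^{2^u}=1+M^{2^u}P^{2^u}$, which is not $1+MP^{2^u}$: Frobenius does not commute with ``multiply by $M$ and add $1$''. Had your sub-lemma been true, each macro-step of $B$ would cost a single step of $A$ and $\ell_A$ would be about $2^r$, contradicting the $2^u(2^r-1)+1$ you announce. Your fallback ``mixed power'' induction is also false: already for $u=1$, one step from $P^2$ yields the odd part $1+M^{2j+1}(M+1)^{2N-1}$, whereas $P\cdot\bigl(1+M^{j+1}(M+1)^{N-1}\bigr)=1+M^{j}(M+1)^{N-1}+M^{2j+1}(M+1)^{2N-1}$ carries an extra term; moreover the index range $0\le i<u$ cannot account for the $2^u$ steps a macro-step requires.

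The repair is simpler than either attempt, and it is what the paper does: since $A=\bigl(1+M(1+M)^{2^r-1}\bigr)^{2^u}=1+M^{2^u}(1+M)^{2^u(2^r-1)}$, the iterates never leave the one-parameter family $1+M^{a}(1+M)^{b}$, and each step strips exactly one factor $1+M=x(x+1)$, sending $(a,b)$ to $(a+1,b-1)$. Concretely, $A_{2k+1}=1+M^{2^u+k}(1+M)^{2^u(2^r-1)-k}$ for $0\le k\le m:=2^u(2^r-1)-1$, then $A_{2m+2}=(1+M)\bigl(1+M^{2^{u+r}}\bigr)=(1+M)^{2^{u+r}+1}$, hence $A_{2m+3}=1$ and $\ell_A=m+2=2^u(2^r-1)+1$. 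No comparison with the iterates of $B$ is needed; the clean powers $(B_{2k+1})^{2^u}$ do appear along the way (when $k$ is a multiple of $2^u$), but the induction runs directly on the exponent pair $(a,b)$, exactly as in the proof of Lemma \ref{lemmecas1j=0}.
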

\begin{proof}
As above, $A_1 = A$. One has
$$A_2 = 1+M + M^{2^u + 1}(1+M+\cdots+ M^{2^r-1})^{2^u} = (1+M) + M^{2^u+1} \cdot (1+M)^{2^u(2^r-1)}.$$ Thus, $A_3  = 1+ M^{2^u+1} \cdot (1+M)^{2^u(2^r-1)-1}$ and $\deg(A_3) = \deg(A_1) = \deg(A)$. We see (by induction on $k$) that $$\text{$A_{2k+1} = 1+ M^{2^u+k} \cdot (1+M)^{2^u(2^r-1)-k}$ and $\deg(A_{2k+1}) = \deg(A)$.}$$ For $k = 2^u(2^r-1)-1 = m$, we get $A_{2m+1} =  1+ M^{2^u+m} \cdot (1+M)$,
$$A_{2m+2} = (1+M)+M^{2^u+m+1} \cdot (1+M) = (1+M) (1+M^{2^u+m+1}) = (1+M) \cdot  (1+M)^{2^{u+r}}.$$ So, $A_{2m+3} = 1$ and ${\ell_A} = m+1 + r_{A_{2m+3}} =m+1+ 1 = 2^u \cdot (2^r-1)+1$.
\end{proof}
\subsection{Family $\{(M^{2^{r}-2v}+ \cdots+ M+1)^{2^u}: r, v \geq 1, u \geq 1\}$} \label{casM2r-2v}
We assume that $2^r-2v$ is not a power of $2$ (this case is already treated in Section \ref{casM2r}).
\begin{lemma} \label{lemmecasM2r-2}
For $r \geq 2$, $u\geq 1$ and $A=(M^{2^{r}-2}+ \cdots+ M+1)^{2^u}$, the length ${\ell_A}$ equals $2^u + 1$.
\end{lemma}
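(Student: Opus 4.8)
The plan is to work throughout in the variable $M=x^2+x+1$, using repeatedly the identities $M^{2^j}+1=(M+1)^{2^j}=\bigl(x(x+1)\bigr)^{2^j}$, and to exhibit a closed form for every odd term of the Collatz sequence. Write $S=M^{2^r-2}+\cdots+M+1=\dfrac{M^{2^r-1}+1}{M+1}$, so that $A=S^{2^u}=\dfrac{M^{N}+1}{(M+1)^{2^u}}$ with $N=(2^r-1)2^u$. Since $S$ has $2^r-1$ summands, an odd number, $S(0)=S(1)=1$, so $A$ is odd and $A_1=A$. The first computational step is the factorization
$$A+1=\frac{M^{N}+1+(M+1)^{2^u}}{(M+1)^{2^u}}=\frac{M^{2^u}\bigl(M^{(2^r-2)2^u}+1\bigr)}{(M+1)^{2^u}}=M^{2^u}(M+1)^{2^u}\,W,$$
where $W:=\sum_{i=0}^{2^{r-1}-2}M^{\,i\,2^{u+1}}$; here one uses $(2^r-2)2^u=2^{u+1}(2^{r-1}-1)$ together with $M^{2^{u+1}}+1=(M+1)^{2^{u+1}}$. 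The point that makes everything work is that $W$ is \emph{odd}: it has $2^{r-1}-1$ summands, an odd number, so $W(0)=W(1)=1$.

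Next I would prove, by induction on $k$, that for $1\leq k\leq 2^u$
$$A_{2k-1}=1+M^{\,k-1+2^u}\,(M+1)^{\,2^u+1-k}\,W ,$$
and moreover that $a_{2k}=b_{2k}=1$ for $1\leq k\leq 2^u-1$. The base case $k=1$ is precisely the displayed factorization of $A_1+1=A+1$. For the inductive step, substituting the formula for $A_{2k-1}$ gives
$$A_{2k}=1+MA_{2k-1}=(M+1)\bigl(1+M^{\,k+2^u}(M+1)^{\,2^u-k}\,W\bigr);$$
when $k\leq 2^u-1$ the exponent $2^u-k$ is $\geq 1$, so the second factor is $\equiv 1\pmod{M+1}$, hence odd, whence $x$ and $x+1$ each divide $A_{2k}$ exactly once and $A_{2k+1}=1+M^{\,k+2^u}(M+1)^{\,2^u-k}\,W$, which is the claimed formula at $k+1$. (The closed form can also be guessed from the affine recursion $A_{2k+1}-1=\tfrac{M}{M+1}\bigl(A_{2k-1}-1\bigr)$, valid as long as $a_{2k}=b_{2k}=1$.) In particular each $A_{2k-1}$, $1\leq k\leq 2^u$, is odd and has $M$-degree $2^{u+1}(2^{r-1}-1)=\deg_M A$.

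It remains to treat $k=2^u$. Here $A_{2\cdot 2^u-1}=1+M^{\,2^{u+1}-1}(M+1)\,W$, hence $A_{2\cdot 2^u}=(M+1)\bigl(1+M^{\,2^{u+1}}W\bigr)$. Since $M^{2^{u+1}}W=\sum_{j=1}^{2^{r-1}-1}(M^{2^{u+1}})^{j}$, we get
$$1+M^{2^{u+1}}W=\sum_{j=0}^{2^{r-1}-1}(M^{2^{u+1}})^{j}=\frac{M^{2^{u+r}}+1}{M^{2^{u+1}}+1}=(M+1)^{\,2^{u+r}-2^{u+1}}=(M+1)^{\deg_M A}.$$
Therefore $A_{2\cdot 2^u}=(M+1)^{\deg_M A+1}=\bigl(x(x+1)\bigr)^{\deg_M A+1}$, so $a_{2\cdot 2^u}=b_{2\cdot 2^u}=\deg_M A+1$ and $A_{2\cdot 2^u+1}=1$. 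In the notation of Theorem \ref{collatz0} and Corollary \ref{casell=0} this means $m=2^u$, so $\ell_A=m+1=2^u+1$.

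The main obstacle is the part of the induction that keeps the valuations at $x$ and at $x+1$ equal to \emph{exactly} $1$ at every intermediate even step, rather than merely $\geq 1$: this is what forces $\ell_A$ to equal $2^u+1$ instead of just being bounded by it, and it relies on $W$ being odd and on the cofactor $M^{k+2^u}(M+1)^{2^u-k}W$ staying divisible by $M+1$ until $k=2^u$. The other delicate point is identifying the exact exponent $\deg_M A+1$ of $x(x+1)$ in the last even term $A_{2\cdot 2^u}$, which comes out of the geometric-sum collapse above.
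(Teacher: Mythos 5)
Your proof is correct and follows essentially the same route as the paper's: the same closed form $A_{2k+1}=1+M^{2^u+k}(1+M)^{2^u-k}\,W$ with $W=(1+M+\cdots+M^{2^{r-1}-2})^{2^{u+1}}$ established by induction, followed by the same geometric-sum collapse of the last even term into a power of $1+M=x(x+1)$. The only differences are presentational: you start the closed form at $A_1$ via the factorization of $A+1$ (the paper starts at $A_3$ via $A_2=1+MA_1$) and you make explicit the check that the intermediate valuations at $x$ and $x+1$ are exactly $1$, which the paper leaves implicit in the oddness of the cofactor.
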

\begin{proof}
$A_1 = A$ since $A$ is odd. $A_2= 1+M + M^{2^u+1} \cdot (1+M+\cdots+M^{2^r-3})^{2^u}$.
Therefore,
$$\begin{array}{lcl}
A_2& =& 1+M + M^{2^u+1} \cdot (1+M)^{2^u}(1+M+\cdots+M^{2^{r-1}-2})^{2^{u+1}}\\
&=& (1+M) \cdot [1+M^{2^u+1} \cdot (1+M)^{2^u-1}(1+M+\cdots+M^{2^{r-1}-2})^{2^{u+1}}].
\end{array}$$
Thus, $A_3 = 1+M^{2^u+1} \cdot (1+M)^{2^u-1}(1+M+\cdots+M^{2^{r-1}-2})^{2^{u+1}}$.\\
We see (by induction on $k$) that
$$A_{2k+1} = 1+M^{2^u+k} \cdot (1+M)^{2^u-k}(1+M+\cdots+M^{2^{r-1}-2})^{2^{u+1}}.$$
In particular, for $k =2^u-1 = m$, one has: $$A_{2m+1} = 1+M^{2^{u+1}-1} \cdot (1+M)(1+M+\cdots+M^{2^{r-1}-2})^{2^{u+1}}.$$
So, $$\begin{array}{lcl}
A_{2m+2}& = &1+MA_{2m+1} = (1+M)(1+M^{2^{u+1}}(1+M+\cdots+M^{2^{r-1}-2})^{2^{u+1}}\\
&= &(1+M)(1+M+\cdots+M^{2^{r-1}-1})^{2^{u+1}}\\
& =& (1+M)((1+M)^{2^{r-1}-1})^{2^{u+1}}.
\end{array}$$
We deduce that $A_{2m+3} = 1$ and ${\ell_A} = m+1 + {\ell}_{A_{2m+3}} = m+1+1 = 2^u+1$.
\end{proof}
\begin{proposition} \label{lemmecasM2r-2v}
If $u,v \geq 1, r\geq 2$ and $A=(M^{2^{r}-2v}+ \cdots+ M+1)^{2^u}$, then ${\ell_A} = 2^u(2v-1) + 1$.
\end{proposition}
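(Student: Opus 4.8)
The plan is to argue by strong induction on $v$, taking Lemma~\ref{lemmecasM2r-2} (the case $v=1$) as the base case; the sub-cases in which the induction ``escapes'' to a polynomial with a power-of-$2$ exponent will be closed by the formula of Section~\ref{casM2r}. Write $B_j:=1+M+\cdots+M^j$; then $B_j(0)=B_j(1)=(j+1)\bmod 2$, so $B_j$ is odd exactly when $j$ is even, and since $2^r-2v$ is even the polynomial $A=B_{2^r-2v}^{2^u}$ is odd, whence $A_1=A$. The only identities I will need are the Frobenius rule $(1+z)^{2^u}=1+z^{2^u}$ over $\F_2$, the factorisation $B_{2j+1}=(1+M)B_j^2$ (valid because $1+M=x^2+x=x(x+1)$ and $B_{2j+1}$ has an even number of terms), and $MB_{j-1}=B_j+1$.

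First I would reproduce, essentially verbatim, the computation from the proof of Lemma~\ref{lemmecasM2r-2}. From $B_{2^r-2v}=1+MB_{2^r-2v-1}=1+M(1+M)B_{2^{r-1}-v-1}^2$ and Frobenius one gets $A=1+M^{2^u}(1+M)^{2^u}B_{2^{r-1}-v-1}^{2^{u+1}}$, and then an induction on $k$ identical in shape to the one in Lemma~\ref{lemmecasM2r-2} shows, for $0\le k\le 2^u-1$,
\[
A_{2k+1}=1+M^{2^u+k}(1+M)^{2^u-k}B_{2^{r-1}-v-1}^{2^{u+1}},\qquad \deg A_{2k+1}=\deg A .
\]
The two facts driving this are that $1+M=x(x+1)$ divides the second summand (so $A_{2k+1}$ is odd, as $2^u-k\ge1$) and that $A_{2k+2}=1+MA_{2k+1}=(1+M)\bigl[1+M^{2^u+k+1}(1+M)^{2^u-k-1}B_{2^{r-1}-v-1}^{2^{u+1}}\bigr]$, so that $a_{2k+2}=b_{2k+2}=1$. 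For $k=2^u-1$, using $MB_{2^{r-1}-v-1}=B_{2^{r-1}-v}+1$ and Frobenius once more,
\[
A_{2^{u+1}}=1+MA_{2^{u+1}-1}=(1+M)\,B_{2^{r-1}-v}^{2^{u+1}},
\]
so the next odd polynomial is $A_{2^{u+1}+1}=$ the odd part of $B_{2^{r-1}-v}^{2^{u+1}}$. Since a Collatz step depends only on the current polynomial, the odd sequence of $A$ is $A_1,A_3,\dots,A_{2^{u+1}-1}$ (that is, $2^u$ polynomials) followed by the full odd sequence of $A_{2^{u+1}+1}$; hence $\ell_A=2^u+\ell_{A_{2^{u+1}+1}}$, and everything reduces to showing $\ell_{A_{2^{u+1}+1}}=2^{u+1}(v-1)+1$.

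For that last step I would extract the odd part of $B_{2^{r-1}-v}^{2^{u+1}}$ by iterating $B_j^{2^w}=(1+M)^{2^w}B_{(j-1)/2}^{2^{w+1}}$ whenever the current index $j$ is odd, noting that $(1+M)^{2^w}=x^{2^w}(x+1)^{2^w}$ leaves the odd part unchanged. Letting $2^t$ be the exact power of $2$ dividing $v-1$ (so $t=0$ if $v$ is even), one checks that exactly $t$ such steps occur, landing on the even index $j^*:=(2^{r-1}-v+1)/2^{t}-1$ and exponent $2^{u^*}$ with $u^*:=u+1+t$; thus $A_{2^{u+1}+1}=B_{j^*}^{2^{u^*}}$ with $B_{j^*}$ odd and $0<j^*<2^r-2v$. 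Writing $j^*=2^{r^*}-2v^*$ with $r^*$ minimal, a short $2$-adic calculation gives $r^*=r-1-t$ and $v^*=\tfrac12\bigl(1+(v-1)/2^{t}\bigr)$, an integer with $1\le v^*<v$. When $j^*$ is not a power of $2$, $B_{j^*}^{2^{u^*}}$ is a member of the present family to which the induction hypothesis applies; when $j^*$ is a power of $2$, it is a member of the family of Section~\ref{casM2r}; in both cases the relevant length formula evaluates to $2^{u^*}(2v^*-1)+1=2^{u+1+t}\cdot(v-1)/2^{t}+1=2^{u+1}(v-1)+1$. Therefore $\ell_A=2^u+2^{u+1}(v-1)+1=2^u(2v-1)+1$, as claimed.

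The first two paragraphs are routine Frobenius bookkeeping, parallel to the proof of Lemma~\ref{lemmecasM2r-2}. The real work — and the only delicate point — is the third: determining how many copies of $1+M$ are peeled off (namely the $2$-adic valuation of $2^{r-1}-v+1$), checking that $B_{j^*}^{2^{u^*}}$ lies in one of the two already-settled families with precisely the stated parameters, and dealing with the degenerate parameter ranges (small values of $r^*$, the case when $j^*$ is a power of $2$, and the tacit convention that $r$ be chosen minimally, that is, $v\le 2^{r-2}$).
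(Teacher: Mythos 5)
Your argument is correct, and it checks out numerically against the paper's formula, but it is organized differently from the paper's proof. The computational core is the same (the Frobenius bookkeeping of Lemma~\ref{lemmecasM2r-2}), but where you peel a \emph{single} dyadic layer per phase — a block of $2^u$ odd terms ending in $A_{2^{u+1}}=(1+M)B_{2^{r-1}-v}^{2^{u+1}}$ — and then close by strong induction on $v$ via the transfer identity $2^{u^*}(2v^*-1)+1=2^{u+1}(v-1)+1$, the paper instead peels the full $2$-power of $2v$ at once (writing $2v=2^{t_1}s_1$, so its first block has length $2^u(2^{t_1}-1)$), unrolls the whole process into alternating sequences $s_1>s_2>\cdots$ with $s_{2i-1}-1=2^{t_{2i}}s_{2i}$, $s_{2i}+1=2^{t_{2i+1}}s_{2i+1}$, and recovers $2^u(2v-1)+1$ from a final telescoping sum over relations (\ref{equa-s-et-t-1})--(\ref{equa-s-et-t-2}). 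Your version buys a cleaner termination argument (a single decreasing parameter $v$ and an exact invariant, with the power-of-$2$ escape to Lemma~\ref{lemmecasM2r} handled uniformly since then $j^*=2^{r^*-1}$ and $2^{u^*}(2^{r^*-1}-1)+1$ agrees with the invariant), at the cost of the parameter checks you flag; the paper's version avoids the change of parameters but needs the bookkeeping of the $B^{2i-1}$ blocks and the telescoping identity. Two remarks. First, the ``tacit convention'' you point out is genuinely needed: the statement as printed is false without minimality of $r$ (e.g.\ $(M^2+M+1)^{2^u}$ read with $r=3$, $v=3$ would give $5\cdot 2^u+1$ instead of $2^u+1$), and the paper never makes this explicit, so naming it is a real improvement. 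Second, the one step you leave unverified — that minimality propagates to $(r^*,v^*,u^*)$, so that the induction hypothesis applies and $r^*\ge 2$ — does hold and is short: writing $v-1=2^t m$ with $m$ odd, the hypothesis $v\le 2^{r-2}$ gives $2^t m<2^{r-2}$, hence $m+1\le 2^{r-2-t}$ (as $m$ is odd), hence $j^*=2^{r^*}-2v^*\ge 2^{r^*-1}$, i.e.\ $v^*\le 2^{r^*-2}$; you should include this to make the induction airtight.
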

\begin{proof}
The case where $v=1$ is already treated above. Suppose that $v \geq 2$. Put $2v = 2^{t_1}s_1$ with $s_1$ odd.
$A_1 = A$ because $A$ is odd.
$$\begin{array}{lcl}
A_2 &= &1+M + M^{2^u+1} \cdot (1+M)^{2^u(2^t-1)}(1+M+\cdots+M^{2^{r-t_1}-s_1+1})^{2^{u+t_1}}\\
&= &(1+M)\cdot [1+  M^{2^u+1} \cdot (1+M)^{2^u(2^{t_1}-1)-1}(1+M+\cdots+M^{2^{r-t_1}-s_1+1})^{2^{u+t_1}}].
\end{array}$$
So, $A_3 =1+  M^{2^u+1} \cdot (1+M)^{2^u(2^{t_1}-1)-1}(1+M+\cdots+M^{2^{r-t_1}-s_1+1})^{2^{u+t_1}}$.\\
We see (by induction on $k$) that
$$A_{2k+1} = 1+  M^{2^u+k} \cdot (1+M)^{2^u(2^{t_1}-1)-k}(1+M+\cdots+M^{2^{r-t_1}-s_1+1})^{2^{u+t_1}}.$$ In particular, for $k =2^u(2^{t_1}-1)-1 = m_1$, one has: $$A_{2m_1+1} = 1+M^{2^{u}+m_1} \cdot (1+M) (1+M+\cdots+M^{2^{r-t_1}-s_1+1})^{2^{u+t_1}}.$$
$\bullet$ If $s_1 = 1$, then $$A_{2m_1+2} = \cdots= (1+M) \cdot (1+M+\cdots+M^{2^{r-t_1}-1})^{2^{u+t_1}} = (1+M)^{2^{u+t_1}(2^{r-t_1}-1)+1}.$$
Thus, $A_{2m_1+3}=1$ and ${\ell_A} = m_1+1+1 = 2^u(2^{t_1}-1)+1$.\\
$\bullet$ If $s_1 \geq 3$, then set $s_1-1 = 2^{t_2}s_2$, $s_2$ odd, $s_2 < s_1$. \\
In this case,
$$A_{2m_1+2} = \cdots= (1+M)^{2^{t_2}} \cdot (1+M+\cdots+M^{2^{r-t_1-t_2}-s_2-1})^{2^{u+t_1+t_2}}.$$
Hence, $A_{2m_1+3} = (1+M+\cdots+M^{2^{r-t_1-t_2}-s_2-1})^{2^{u+t_1+t_2}}$ and
$${\ell_A} = m_1+1 + {\ell}_{A_{2m_1+3}} = 2^u(2^{t_1}-1)+r_{A_{2m_1+3}}.$$
We remark that $A_{2m_1+3}$ has the same form as $A_1$, with $r-t_1-t_2$ instead of $r$,  $s_2+1$ instead of $2v$ and $u+t_1+t_2$ instead of $u$.\\
- If $s_2 = 1$, then $s_1-1 = 2^{t_2}$ and by Lemma \ref{lemmecasM2r-2}, ${\ell}_{A_{2m_1+3}} = 2^{u+t_1+t_2}+1$,
$${\ell_A} = m_1+1+ r_{A_{2m_1+3}} = 2^u(2^{t_1}-1)+ 2^{u+t_1+t_2}+1 = \cdots = 2^u(2v-1)+1.$$
- If $s_2 \geq 3$, then by putting $s_2+1 = 2^{t_3}s_3$, $s_3$ odd and $m_2 = 2^{u+t_1+t_2}(2^{t_3}-1)-1$, one has ${\ell}_{A_{2m_1+3}} = m_2 + 1 + {\ell}_{A_{2m_2+3}}$, $s_3 < s_2 < s_1$.\\
\\
And so on...\\
\\
We obtain the following natural number sequences:
$$\left\{\begin{array}{l}
\text{$\cdots <s_3 < s_2 < s_1$ and $t_1, t_2, t_3,\ldots$
where $s_1, s_2, s_3,\ldots$ are all odd,}\\
\text{$2v = 2^{t_1}s_1,\ s_1 -1 =2^{t_2}s_2$ if $s_1 \geq 3$},\\
\text{$s_2 + 1 = 2^{t_3}s_3$ if $s_2 \geq 3$}\\
\vdots\\
\end{array}
\right.$$
Therefore, there exists $c \in \N^*$ such that $s_{2c-1} = 1$ or $s_{2c} = 1$.\\
$\star$ If $s_{2c-1} = 1$, then
\begin{equation} \label{equa-s-et-t-1}
\left\{\begin{array}{l}
s_1 -1 =2^{t_2}s_2,\\
s_2+1 = 2^{t_3}s_3,\\
\vdots\\
s_{2c-3} -1 =2^{t_{2c-2}}s_{2c-2},\\
s_{2c-2} +1 = 2^{t_{2c-1}}.
\end{array}
\right.
\end{equation}
We need the following notations.
$$\begin{array}{l}
B^1_1= A_1, \ldots, B^1_{2m_1+1}=A_{2m_1+1},\ m_1 = 2^u(2^{t_1}-1)-1,\\
B^3_1= A_{2m_1+3}, \ldots, B^3_{2m_3+1} = (B^3_1)_{2m_3+1},\ m_3 = 2^{u+t_1+t_2}(2^{t_3}-1)-1,\\
\vdots\\
B^{2c-1}_1= {(B^{2c-3}_1)}_{2m_{2c-3}+3}, m_{2c-1} = 2^{u+t_1+t_2+\cdots+t_{2c-3}+t_{2c-2}}(2^{t_{2c-1}}-1)-1.
\end{array}$$
The odd Collatz polynomial sequence for $A$ is the union of:\\
$[B^1_1,..., B^1_{2m_1+1}], \ [B^3_1,..., B^3_{2m_3+1}], ..., 
[B^{2c-1}_1,..., B^{2c-1}_{2m_{2c-1}+1}], \ [B^{2c-1}_{2m_{2c-1}+3}=1],$
which respectively are of length: $m_1+1, m_3+1, \ldots, m_{2c-1}+1$ et $1$.\\
So, we get $${\ell_A} = 2^u(2^{t_1}-1) +2^{u+t_1+t_2}(2^{t_3}-1)+\cdots + 2^{u+t_1+\cdots+t_{2c-3}+t_{2c-2}}(2^{t_{2c-1}}-1)+1.$$
By means of relations in (\ref{equa-s-et-t-1}), we see that ${\ell_A} = 2^u(2v-1)+1$.\\
$\star$ If $s_{2c} = 1$, then
\begin{equation} \label{equa-s-et-t-2}
\left\{\begin{array}{l}
s_1 -1 =2^{t_2}s_2,\\
s_2+1 = 2^{t_3}s_3,\\
\vdots\\
s_{2c-3} -1 =2^{t_{2c-2}}s_{2c-2},\  s_{2c-2} +1 = 2^{t_{2c-1}s_{2c-1}},\\
s_{2c-1}-1 = 2^{t_{2c}}
\end{array}
\right.
\end{equation}
The odd Collatz polynomial sequence for $A$ is the union of:
$$[B^1_1,\ldots, B^1_{2m_1+1}], \ [B^3_1,\ldots, B^3_{2m_3+1}], \ldots, [B^{2c-1}_1,\ldots, B^{2c-1}_{2m_{2c-1}+1}]$$
with the sequence for $B := (1+M+\cdots + M^{2^{a}-2})^{2^{b}}$,
where $$\mbox{$a = r-t_1-t_2 -\cdots-t_{2c-1}-t_{2c}$ and $b=u+t_1+t_2 +\cdots+t_{2c-1}+t_{2c}$}.$$
Thus, the length ${\ell}_{B}$ of $B$ equals $2^b+1$ (Lemma \ref{lemmecasM2r-2}) and 
$${\ell_A} = 2^u(2^{t_1}-1) +2^{u+t_1+t_2}(2^{t_3}-1)+\cdots + 2^{u+t_1+\cdots+t_{2c-3}+t_{2c-2}}(2^{t_{2c-1}}-1)+2^b+1.$$
By means of relations in (\ref{equa-s-et-t-2}), we obtain ${\ell_A} = 2^u(2v-1)+1$.
\end{proof}
\begin{corollary} \label{corollM2v}
For $A=(M^{2v}+ \cdots+ M+1)^{2^u}$, ${\ell_A}$ equals $2^u(2^r-2v-1) + 1$, where $r$ is the least integer such that  $2v < 2^r$.
\end{corollary}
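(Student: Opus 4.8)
The plan is to reduce Corollary \ref{corollM2v} to the lengths already established for the families of Sections \ref{casM2r} and \ref{casM2r-2v}, by a change of index: $M^{2v}+\cdots+M+1$ is itself a polynomial of the shape $M^{2^{r}-2w}+\cdots+M+1$. Concretely, assume $v\geq 1$ (for $v=0$ one has $A=1$), let $r$ be the least integer with $2v<2^{r}$, and set $w:=2^{r-1}-v$. Then $v\geq 1$ forces $r\geq 2$, while $2v<2^{r}$ gives $v<2^{r-1}$, hence $w\geq 1$; and $2^{r}-2w=2v$, so
$$A=\bigl(M^{2v}+\cdots+M+1\bigr)^{2^{u}}=\bigl(M^{2^{r}-2w}+\cdots+M+1\bigr)^{2^{u}}.$$

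First I would treat the generic case where $2v$ is not a power of $2$. Then $2^{r}-2w=2v$ is not a power of $2$ either, so $A$ satisfies the hypotheses of Proposition \ref{lemmecasM2r-2v} with $w$ in place of $v$ and the same $u,r$; therefore ${\ell}_{A}=2^{u}(2w-1)+1$, and since $2w-1=2(2^{r-1}-v)-1=2^{r}-2v-1$ this is exactly $2^{u}(2^{r}-2v-1)+1$. Next I would dispose of the complementary case $2v=2^{s}$ with $s\geq 1$, which is precisely the one excluded from Section \ref{casM2r-2v}: here $A=(M^{2^{s}}+\cdots+M+1)^{2^{u}}$ lies in the family of Section \ref{casM2r}, so Lemma \ref{lemmecasM2r} yields ${\ell}_{A}=2^{u}(2^{s}-1)+1$; and since the least $r$ with $2^{s}<2^{r}$ is $r=s+1$, one checks $2^{u}(2^{r}-2v-1)+1=2^{u}(2^{s+1}-2^{s}-1)+1=2^{u}(2^{s}-1)+1$, matching the claim.

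The whole argument is essentially a reindexing, so there is no real obstacle; the only point requiring care is that Proposition \ref{lemmecasM2r-2v} carries the standing assumption that $2^{r}-2v$ is not a power of $2$, which prevents one from applying it when $2v$ is itself a power of $2$ — that boundary case has to be peeled off and handled separately through Lemma \ref{lemmecasM2r}, as above. Beyond that, nothing is needed except verifying the two elementary identities $2w-1=2^{r}-2v-1$ and $2^{s+1}-2^{s}-1=2^{s}-1$.
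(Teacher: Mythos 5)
Your argument is essentially the paper's own: its proof of Corollary \ref{corollM2v} is exactly the one-line reindexing $2v = 2^r-(2^r-2v)$ followed by an application of Proposition \ref{lemmecasM2r-2v}, which is your generic case, including the same identity $2w-1=2^r-2v-1$. Your separate treatment of the boundary case $2v=2^s$ via Lemma \ref{lemmecasM2r} is a correct and worthwhile extra precaution, since the standing assumption of Section \ref{casM2r-2v} (that the top exponent is not a power of $2$) prevents a direct appeal to the proposition there, a point the paper's proof passes over in silence.
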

\begin{proof}
Apply the above proposition by writting $2v = 2^r -(2^r - 2v)$.
\end{proof}
\subsection{Family $\{M^{2^r-j}+ \cdots+ M+1: r \geq 1, \ 1 \leq j \leq 2^{r-1}-1\}$} \label{cas1}
We suppose that $2^r-j$ is not a power of $2$ (see Section \ref{cas0} for this case).
\begin{lemma}\label{cas2rmoinsj}
i) If $A = M^{2^r-1}+ \cdots+ M+1$, then $A_1 = 1$ and ${\ell_A}=1$.\\
ii) If $A = M^{2^r-2k}+ \cdots+ M+1$ with $r > k \geq 1$, then ${\ell_A}=2k$.\\
iii) If $A = M^{2^r-2k-1}+ \cdots+ M+1$ with $r > 2k+1 \geq 3$, then ${\ell_A} = 2k+1$.
\end{lemma}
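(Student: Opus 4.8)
The plan is to treat the three cases of Lemma \ref{cas2rmoinsj} by direct computation of the Collatz sequence, exploiting that $1+M+\cdots+M^{n} = (1+M^{n+1})/(1+M) = (1+M)^{?}$ only in favourable cases, but more usefully the identity $1+M+\cdots+M^{n-1} = (1+M^{n})/M_{\,}^{0}$... in characteristic $2$ we have $(1+M)\cdot(1+M+\cdots+M^{n-1}) = 1+M^{n}$, so $A = M^{2^r-j}+\cdots+1$ satisfies $(1+M)A = 1+M^{2^r-j+1}$. The first step is therefore to record this telescoping identity and the fact that over $\F_2$, $M^{2}\equiv M+1 \pmod{x(x+1)}$ is irrelevant, but $M(0)=M(1)=1$, so $A(0)=A(1) = \#\{0,\ldots,2^r-j\}\bmod 2 = (2^r-j+1)\bmod 2$. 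Hence $A$ is odd iff $2^r-j$ is even, i.e. $j$ odd; this already splits the parity bookkeeping needed below.

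For part i), $j=1$: then $A = M^{2^r-1}+\cdots+1$ has $2^r$ terms, so $A(0)=A(1)=0$, $A$ is even. I would factor $A$ explicitly: $(1+M)A = 1+M^{2^r} = (1+M)^{2^r}$, so $A = (1+M)^{2^r-1}$, which is $x^{2^r-1}(x+1)^{2^r-1}$ times... no, $1+M = x^2+x = x(x+1)$, so $A = \big(x(x+1)\big)^{2^r-1}$, i.e. $A_0 = x^{2^r-1}(x+1)^{2^r-1}\cdot 1$, giving $A_1 = 1$ directly and ${\ell}_A = 1$ (the sequence is just $[1]$, matching Corollary \ref{casell=0} with $m$ as small as possible). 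For part ii), $A = M^{2^r-2k}+\cdots+1$ with $2^r-2k$ even, so $A$ is odd and $A_1 = A$; write $2^r-2k = 2(2^{r-1}-k)$ and set $2k = 2^r-(2^r-2k)$, i.e. identify $A$ with a member of the family in Section \ref{casM2r-2v}/Corollary \ref{corollM2v} in the case $u=0$. Concretely, $A = M^{2v'}+\cdots+1$ with $v' = 2^{r-1}-k$; by Corollary \ref{corollM2v} (with $u=0$), ${\ell}_A = 2^{r'} - 2v' - ... $ wait — one must instead use Proposition \ref{lemmecasM2r-2v} reading $2^r-2k$ as ``$2^{\rho}-2\nu$''. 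The cleanest route: $2^r-2k < 2^r$ and is not a power of $2$, so Corollary \ref{corollM2v} with $u=0$ applied to $M^{2k}+\cdots+1$ would give a different polynomial; rather, apply Proposition \ref{lemmecasM2r-2v} directly with its $r\mapsto r$, $u=0$, $v=k$: the hypothesis there is $r\ge 2$, $v\ge 1$, and the output is ${\ell}_A = 2^0(2k-1)+1 = 2k$. That is exactly ii). For part iii), $A = M^{2^r-2k-1}+\cdots+1$: now $2^r-2k-1$ is odd, so $A(0)=A(1)=0$, $A$ is even. Factor out the linear part: $(1+M)A = 1+M^{2^r-2k} = 1+M^{2(2^{r-1}-k)} = \big(1+M^{2^{r-1}-k}\big)^{2} = \big((1+M)(1+M+\cdots+M^{2^{r-1}-k-1})\big)^{2}$, hence $A = (1+M)\big(1+M+\cdots+M^{2^{r-1}-k-1}\big)^{2} = x(x+1)\cdot C^2$ where $C = M^{2^{r-1}-k-1}+\cdots+1$. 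Since $C$ has $2^{r-1}-k$ terms — and the hypothesis $r>2k+1$ forces $2^{r-1}-k$ to be... one checks $2^{r-1}-k$ need not have controlled parity, so $C^2$ may or may not be odd; but $x(x+1)\mid A$ already extracts all the linear factors coming from $1+M$, and $C^2$ contributes linear factors only if $C$ is even. In the range $r>2k+1\ge 3$ one has $2^{r-1}-k-1 \ge 2^{r-1} - (r-1)/2 - 1 > 0$ and $2^{r-1}-k$ even precisely when... I would simply split: $A_0 = A = x^{a_0}(x+1)^{b_0}A_1$ and compute $A_1$; then $A_1$ is (a power of $2$ of) $M^{2^{r-1}-k-1}+\cdots+1$, which is again of the form in Section \ref{casM2r-2v} with the exponent $2^{r-1}-k-1 = $ ``$2^{\rho}-2\nu$'' after extracting the $2$-power, and Proposition \ref{lemmecasM2r-2v}/Corollary \ref{corollM2v} gives its length; adding the one initial even step yields ${\ell}_A = 1 + {\ell}_{A_1}$, and the arithmetic collapses to $2k+1$.

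The key technical ingredients are thus (1) the telescoping identity $(1+M)(1+M+\cdots+M^{n-1}) = 1+M^{n}$ over $\F_2$, together with $1+M = x(x+1)$; (2) Frobenius: $1+M^{2s} = (1+M^{s})^2$, used to repeatedly halve exponents; and (3) reduction to the already-proved Proposition \ref{lemmecasM2r-2v} and Corollary \ref{corollM2v} by matching $2^r - j$ (after pulling out its largest power of $2$) to the pattern $2^{\rho} - 2\nu$ there. The bookkeeping of which initial terms are ``even steps'' versus being absorbed into $A_1$ is routine once one writes $n$-term sums $\bmod 2$ for their values at $0$ and $1$.

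The main obstacle I anticipate is purely organisational rather than conceptual: in part iii) the quantity $2^{r-1}-k-1$ must itself be decomposed as ``$2^{t}\cdot(\text{odd})$'' and fed back into Proposition \ref{lemmecasM2r-2v}, and one has to verify that the side conditions of that proposition (its ``$r\ge 2$'', ``$v\ge 1$'', ``$2^r-2v$ not a power of $2$'') are met under the stated hypothesis $r>2k+1$; there may be small-case exceptions (e.g. when $2^{r-1}-k-1$ happens to be a power of $2$, which throws one back to Section \ref{cas0} / Lemma \ref{lemmecas1j=0} instead). I would handle these by a short case distinction on whether the reduced exponent is a power of $2$, invoking Lemma \ref{lemmecas1j=0} in that branch and Proposition \ref{lemmecasM2r-2v} otherwise, and in each branch check that the closed forms $2k$ and $2k+1$ drop out. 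No step requires a genuinely new idea; the length formulas are forced once the factorisations above are in place.
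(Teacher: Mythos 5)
Part i) of your proposal is fine and coincides with the paper's argument ($A=(1+M)^{2^r-1}=x^{2^r-1}(x+1)^{2^r-1}$, so $A_1=1$). The genuine gap is in part ii): you apply Proposition \ref{lemmecasM2r-2v} ``with $u=0$'', but that proposition (like Lemma \ref{lemmecasM2r-2} and Corollary \ref{corollM2v}, whose proofs rest on it) is stated and proved only for $u\geq 1$; its $u=0$ instance, namely $\ell_A=2k$ for $A=M^{2^r-2k}+\cdots+M+1$, is exactly the statement ii) you are trying to prove, so as written the argument is circular (you quote the side conditions ``$r\ge 2$, $v\ge 1$'' but omit ``$u\ge 1$''). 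The missing step is the paper's single Collatz move: since $A$ is odd, $A_1=A$ and $A_2=1+MA_1=M^{2^r-2k+1}+\cdots+M+1=\frac{1+M^{2^r-2k+2}}{1+M}$; for $j=2$ this equals $(1+M)^{2^r-1}$, giving $\ell_A=2$ directly, and for $2k\ge 4$ one writes $2k-2=2^uw$ with $u\ge 1$, $w=2t-1$ odd, so that $A_2=(1+M)^{2^u-1}\,(M^{2^{r-u}-2t}+\cdots+M+1)^{2^u}$ and $A_3$ is a genuine $2^u$-th power with $u\ge 1$; only now does Proposition \ref{lemmecasM2r-2v} apply, yielding $\ell_A=1+\ell_{A_3}=1+\bigl(2^u(2t-1)+1\bigr)=2k$.

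In part iii) your factorisation idea is essentially the paper's, but two points need repair. First, the length bookkeeping: removing the linear factors of the even polynomial $A_0$ does not contribute a term to the odd sequence, so $\ell_A=\ell_{A_1}$, not $1+\ell_{A_1}$ (you use the correct convention yourself in part i)); with your extra ``$+1$'' and the correct value $\ell_{A_1}=2k+1$ you would land on $2k+2$. Second, halving only once, $A=(1+M)C^2$ with $C=M^{2^{r-1}-k-1}+\cdots+1$, leaves a possibly even $C$, and ``the arithmetic collapses to $2k+1$'' is asserted rather than shown; the clean route is to extract the whole $2$-power at once: write $2k=2^uw$, $u\ge 1$, $w=2t-1$ odd, so $A=(1+M)^{2^u-1}(M^{2^{r-u}-2t}+\cdots+M+1)^{2^u}$, the inner sum has an odd number of terms hence is odd, and Proposition \ref{lemmecasM2r-2v} (now with $u\ge 1$) gives $\ell_A=\ell_{A_1}=2^u(2t-1)+1=2k+1$. (Minor slip: ``$A$ is odd iff $2^r-j$ is even, i.e.\ $j$ odd'' should read ``$j$ even''; your subsequent case analysis uses the correct parity.)
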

\begin{proof}
i): For $j=1$, $A_0 =A = (1+M)^{2^r-1} =x^{2^r-1}(x+1)^{2^r-1} \cdot 1$. \\
ii): For $j=2$, one has $A_1 = A$ because $A$ is odd.
$$A_2 = 1+MA_1 =1+M^{2^r-1}+ \cdots+ M^2+M = (1+M)^{2^r-1}.$$ So, $A_3 = 1$ and ${\ell_A} = 2$.\\
If $j = 2k \geq 4$, then $A_1 = A$ as above. One has:
$$A_2 = 1+MA_1 = M^{2^r-2k+1} +\cdots + M+1 = \frac{M^{2^r-(2k-2)} + 1}{M+1}.$$
Put $2k-2 = 2^uw$ where $u \geq 1$ and $w=2t-1$ is odd. We get
$$A_2 = (M+1)^{2^u-1} \cdot (M^{2^{r-u}-2t} +\cdots + M+1)^{2^u},$$
and thus $A_3 = (M^{2^{r-u}-2t} +\cdots + M+1)^{2^u}$. Proposition \ref{lemmecasM2r-2v} implies that ${\ell}_{A_3} = 2^u \cdot (2t-1) + 1=2k-1$. Hence, ${\ell_A} = {\ell}_{A_3} + 1 = 2k$.\\
iii): $A$ is even. Put $2k = 2^uw$ with $u \geq 1$ and $w = 2t-1$ odd. One has:
$$A = (M+1)^{2^u-1} \cdot (M^{2^{r-u}-2t} +\cdots + M+1)^{2^u}.$$
So, $A_1 = (M^{2^{r-u}-2t} +\cdots + M+1)^{2^u}$, ${\ell_A} = {\ell}_{A_1} = 2^u \cdot (2t-1) + 1 = 2k+1$.
\end{proof}
\subsection{Family $\{M^n+1: n \geq 2\}$} \label{exemple1}
In this section, we take $A :=M^n+1= (x^2+x+1)^n +1$, for $n \geq 2$ so that $A$ is even. Put $n = 2^r u$, where $r \geq 0$ and $u$ odd. One has $A = (M_1+1)^{2^r} \cdot (M^{u-1}+\cdots +M+1)^{2^r}$. Hence, the first polynomial in the odd sequence is $A_1 = (M^{u-1}+\cdots +M+1)^{2^r}$.\\
On the other hand, if $n \geq 2$, then there exists a unique positive integer $r$ such that $2^{r-1} < n \leq 2^r$. Thus, we may write $n = 2^r - j$, with $0 \leq j \leq 2^{r-1} - 1$.
\begin{proposition} \label{partcase1}
Let $A = {M}^{2^r-j} +1$ where $r \geq 1$ and $0\leq j \leq 2^{r-1}-1$. Then, the odd sequence of $A$ is of length $j+1$ (which is small enough).
\end{proposition}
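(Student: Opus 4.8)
The plan is to reduce the computation of $\ell_A$ to that of $\ell_{A_1}$, and then to read $\ell_{A_1}$ off from Corollary~\ref{corollM2v}. Write $n=2^e u$ with $u$ odd (so $2^e$ is the exact power of $2$ dividing $n$; this $e$ is not the $r$ of the statement). As recalled just above, $A=M^n+1=(M+1)^{2^e}\,(M^{u-1}+\cdots+M+1)^{2^e}$ is even with odd part $A_1=(M^{u-1}+\cdots+M+1)^{2^e}$, hence $\ell_A=\ell_{A_1}$. If $j=0$ then $n=2^r$, so $u=1$, $A_1=1$ and $\ell_A=1=j+1$. So assume $j\geq1$; then $u\geq3$, for $u=1$ would give $n=2^e\leq2^{r-1}$, contradicting $n=2^r-j\geq2^{r-1}+1$.

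\emph{Case $e\geq1$.} From $u\geq3$ and $2^e u=n<2^r$ we get $e\leq r-2$; put $\rho:=r-e\geq2$. The arithmetic heart of the proof is the identity
$$2^{\rho-1}\leq u-1<2^{\rho},$$
i.e. $\rho$ is the least integer with $u-1<2^{\rho}$. The right inequality is $2^eu=n\leq2^r-1<2^r$. For the left one, $2^e$ divides both $n$ and $2^{r-1}$ (the latter since $e\leq r-2$), while $n>2^{r-1}$; as the open interval $(2^{r-1},\,2^{r-1}+2^e)$ contains no multiple of $2^e$, we must have $n\geq2^{r-1}+2^e$, i.e. $u\geq2^{\rho-1}+1$. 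Now $A_1=(M^{u-1}+\cdots+M+1)^{2^e}$ lies in the family of Corollary~\ref{corollM2v} with its ``$2v$'' equal to $u-1$ and outer exponent $2^e$, and by the identity the parameter ``$r$'' of that corollary equals our $\rho$. Hence
$$\ell_A=\ell_{A_1}=2^e\bigl(2^{\rho}-(u-1)-1\bigr)+1=2^e(2^{\rho}-u)+1=2^r-2^eu+1=2^r-n+1=j+1.$$
(If $u-1$ is a power of $2$ then necessarily $u-1=2^{\rho-1}$, and one invokes Lemma~\ref{lemmecasM2r} in place of Corollary~\ref{corollM2v}, with the same outcome.)

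\emph{Case $e=0$, i.e. $n$ odd.} Here $A_1=M^{n-1}+\cdots+M+1$ and one Collatz step gives $A_2=1+MA_1=M^{n}+\cdots+M+1=(M^{n+1}+1)/(M+1)$. Writing $n+1=2^f g$ with $g$ odd, $f\geq1$, we get $A_2=(M+1)^{2^f-1}(M^{g-1}+\cdots+M+1)^{2^f}$, so $A_3=(M^{g-1}+\cdots+M+1)^{2^f}$ is exactly the odd part of $M^{n+1}+1$, and therefore $\ell_A=\ell_{A_1}=1+\ell_{M^{n+1}+1}$. Since $n+1$ is even with $2^{r-1}<n+1\leq2^r$, either $n+1=2^r$ --- which forces $j=1$ and $\ell_{M^{n+1}+1}=1$, so $\ell_A=2=j+1$ --- or $n+1=2^r-(j-1)$ with $1\leq j-1\leq2^{r-1}-1$, in which case the previous case applied to $M^{n+1}+1$ (legitimate, as $n+1$ is even) gives $\ell_{M^{n+1}+1}=(j-1)+1=j$, whence $\ell_A=1+j=j+1$.

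I expect the genuine crux to be the arithmetic identity $\rho=r-e$ (together with the preliminary bounds $u\geq3$ and $e\leq r-2$); once that is in place, Corollary~\ref{corollM2v} dispatches the even case at once, and the odd case is a one-step reduction to it. The remainder is routine: checking the hypotheses of Corollary~\ref{corollM2v} and of Lemma~\ref{lemmecasM2r}, and confirming that the value $j+1$ recurs in every sub-case. Finally, $j+1\leq2^{r-1}$ whereas $\deg A=2n\geq2^r+2$, so $\ell_A$ is tiny next to the general bound of Theorem~\ref{collatz0} --- which is what the parenthetical ``(which is small enough)'' refers to.
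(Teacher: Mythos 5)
Your proof is correct, and it runs on the same underlying machinery as the paper's: factor the exact power of $2$ out of the exponent, identify $A_1$ as a polynomial of the form $(M^{2v}+\cdots+M+1)^{2^e}$, and invoke the results of Sections \ref{casM2r}--\ref{casM2r-2v}. The organization differs slightly, though. The paper splits into five cases according to $j$ ($j=0$, $j=1$, $j=2k$ with $k$ odd, $j=2k$ with $k$ even, $j$ odd $\geq 3$), writes the matching factorization of $A$ in each case, and quotes Proposition \ref{lemmecasM2r-2v} directly for even $j$ and Lemma \ref{cas2rmoinsj} for odd $j$; the bookkeeping that you isolate as the identity $\rho=r-e$ (that the ``least $r$'' of Corollary \ref{corollM2v} is $r-e$) is absorbed there into the explicit parameterization $j=2k$, $k=2^sw$, $w=2t-1$. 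You instead split only on the parity of $n$, apply Corollary \ref{corollM2v} uniformly in the even case after proving $2^{\rho-1}\leq u-1<2^{\rho}$ (with the correct side remark that the power-of-two subcase falls back on Lemma \ref{lemmecasM2r}), and handle odd $n$ by a one-step self-reduction to $M^{n+1}+1$, i.e.\ $j\mapsto j-1$ within the same family, rather than routing through Lemma \ref{cas2rmoinsj}. The gain of your version is a single clean arithmetic lemma and fewer cases; the paper's version makes each factorization explicit and leans on its already-proved Lemma \ref{cas2rmoinsj}. Both yield $\ell_A=j+1$ in all subcases, and your verification of the hypotheses of Corollary \ref{corollM2v} ($u\geq 3$, $e\leq r-2$, $\rho\geq 2$) is sound.
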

\begin{proof}
- If $j=0$, then $A = M^{2^r}+1 = (M+1)^{2^r}$. So, $A_1 = 1$ and ${\ell_A} = 1$.\\
- If $j=1$, then $A = M^{2^r-1}+1 = (M+1)(M^{2^r-2}+\cdots+M+1)$. Therefore, $A_1 = M^{2^r-2}+\cdots+M+1$ and ${\ell_A} = {\ell}_{A_1} = 2$, by Lemma \ref{cas2rmoinsj}.\\
- If $j = 2k$ with $k = 2t-1$ odd, then
$$A = \cdots= (M+1)^2 \cdot (M^{2^{r-1}-2t} + \cdots +M+1)^2.$$ Thus,
$A_1 = (M^{2^{r-1}-2t} + \cdots +M+1)^2$ and from Proposition \ref{lemmecasM2r-2v}, $${\ell_A} = {\ell}_{A_1} = 2(2t-1)+1= 2k+1.$$
- If $j = 2k$ with $k = 2^sw$ even, $t \geq 1$ and $w=2t-1$ odd, then
$$A = \cdots= (M+1)^{2^{s+1}-2} \cdot (M^{2^{r-s-1}-2t} + \cdots +M+1)^{2^{s+1}}.$$ So,
$A_1 = (M^{2^{r-s-1}-2t} + \cdots +M+1)^{2^{s+1}}$ and from Proposition \ref{lemmecasM2r-2v}, ${\ell_A} = {\ell}_{A_1} =2^{s+1} \cdot (2t-1) + 1= 2k+1$.\\
- If $j = 2k-1$ is odd, then $A = (M+1) \cdot (M^{2^r-2k}+\cdots+M+1)$, $A_1 = M^{2^r-2k}+\cdots+M+1$. By Lemma \ref{cas2rmoinsj}, ${\ell_A} = {\ell}_{A_1} = 2k$.
\end{proof}
For illustration, we give below the odd degree sequences for $T_n = {M}^n+1$,  $n \in \{9,\ldots,16\}$ so that $n = 2^4 -j$, $0\leq j \leq 7$.  Here, the lengths are all smaller than $n=\deg(T_n)/2$.
$$\begin{array}{|l|c|c|}
\hline
n&\text{Degree sequence}&\text{Length}\\
\hline
9&[16, 16, 16, 16, 16, 16, 16, 0]&8\\
10&[16, 16, 16, 16, 16, 16, 0]&7\\
11&[20, 16, 16, 16, 16, 0]&6\\
12&[16, 16, 16, 16, 0]&5\\
13&[24, 24, 24, 0]&4\\
14&[24, 24, 0]&3\\
15&[28, 0]&2\\
16&[0]&1\\
\hline
\end{array}$$

\subsection{Family $\{M^n: n\geq 2\}$} \label{famille2}
We may write $n = 2^r-j$ where $r$ is the least positive integer such that  $u \leq 2^r$ and $0\leq j \leq 2^{r-1}-1$. We prove
\begin{proposition} \label{theofam2}
If $A = {M}^{2^r-j}$ with $r \geq 1$ and $0\leq j \leq 2^{r-1}-1$, then the length ${\ell_A}$ equals $j+1$ (resp. $2^r+1$) if $j \not= 0$ (resp. if $j=0$).
\end{proposition}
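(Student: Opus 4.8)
The plan is to reduce $A = M^{2^r-j}$ to one of the families already analyzed by peeling off the appropriate factors of $M+1 = x^2+x$, since $M^{2^r-j}$ is even exactly when $2^r - j \geq 1$ (which always holds here). The crucial first step is to separate the two ``halves'' of $A$: writing $A_0 = A = M^{2^r-j}$, we have $val_x(M^n) = val_{x+1}(M^n) = 0$ because $M(0) = M(1) = 1$, so in fact $A$ is \emph{odd} and $A_1 = A = M^{2^r-j}$. Then $A_2 = 1 + M \cdot M^{2^r-j} = 1 + M^{2^r-j+1}$, and the idea is to factor this: $1 + M^{2^r-j+1} = (1+M)(1 + M + \cdots + M^{2^r-j})$, and more generally to extract the full power of $1+M$ dividing $1 + M^{2^r-j+1}$ using the $2$-adic valuation of the exponent $2^r - j + 1$. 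This lands us on a polynomial of the shape $(M^{2^s - 2t} + \cdots + M + 1)^{2^e}$ (up to the trivial and linear cases), which is precisely the family governed by Proposition \ref{lemmecasM2r-2v} and Corollary \ref{corollM2v}.

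Next I would organize the argument by the parity and structure of $j$, mirroring the case analysis in Proposition \ref{partcase1}. For $j = 0$: here $A = M^{2^r}$, $A_1 = A$, and $A_2 = 1 + M^{2^r+1} = (1+M)(M^{2^r} + \cdots + M + 1)$, so $A_3 = M^{2^r} + \cdots + M + 1$, which by Lemma \ref{cas2rmoinsj}(i) (with $r$ replaced by $r$, noting $2^r = 2^{r+1} - 2^r$, so $j' = 2^r$... actually one applies Section \ref{cas1} with exponent $2^r$) has a short sequence; tracking the length gives $\ell_A = 2^r + 1$. For $j \geq 1$, the exponent of $A_2$ is $2^r - j + 1$, and I would split into subcases according to whether $j$ is odd or even and, when even, according to the $2$-adic valuation of $2^r-j+1$, in each case identifying $A_3$ (after removing the power of $1+M$) as a member of the family $\{(M^{2^s-2t}+\cdots+M+1)^{2^e}\}$, and then invoking Proposition \ref{lemmecasM2r-2v} to read off $\ell_{A_3}$, finally using $\ell_A = \ell_{A_3} + 1$ (since $A_1 = A$, $A_2$ is even, $A_3$ starts a fresh sequence). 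The bookkeeping should collapse, as in the examples, to $\ell_A = j + 1$.

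The main obstacle I anticipate is purely combinatorial: correctly matching the exponents. After forming $A_2 = 1 + M^{2^r - j + 1}$ and writing $2^r - j + 1 = 2^{u} w$ with $w$ odd, the power of $1+M$ one extracts is $2^u - 1$ (since $1 + M^{2^u w} = (1 + M^w)^{2^u}$ and $1 + M^w = (1+M)(1 + M + \cdots + M^{w-1})$ with $w-1$ even), leaving $A_3 = (M^{w-1} + \cdots + M + 1)^{2^u}$ up to the residual $(1+M)$ factors, and one must then express $w - 1 = 2^s - 2t$ in the normalization required by Proposition \ref{lemmecasM2r-2v} or Corollary \ref{corollM2v} and verify that the resulting length formula $2^u(\text{something}) + 1$ simplifies to $j + 1$. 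Checking that the edge cases ($w = 1$, i.e.\ $2^r - j + 1$ a power of $2$; $j$ odd so that $2^r - j + 1$ is even with a controlled valuation; and the degenerate situations where $A_3 = 1$) are each covered by the already-established lemmas, and that no case produces a length different from $j+1$, is where care is needed; but no genuinely new idea beyond the factorization $1 + M^{2^u w} = \bigl((1+M)(1 + M + \cdots + M^{w-1})\bigr)^{2^u}$ and the prior lemmas should be required.
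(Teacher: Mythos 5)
Your overall route is the same as the paper's: $A$ is odd, so $A_1=A$ and $A_2=1+M^{2^r-j+1}$; one writes the exponent as $2^uw$ with $w$ odd, factors $1+M^{2^uw}=\bigl((1+M)(1+M+\cdots+M^{w-1})\bigr)^{2^u}$, identifies $A_3$, and concludes $\ell_A=\ell_{A_3}+1$ from the previously treated families. However, two of your reductions point at results that do not actually cover the case at hand. First, for $j=0$ you get $A_3=M^{2^r}+\cdots+M+1$, whose top exponent is a power of $2$; this polynomial is explicitly excluded from Section \ref{cas1}, and Lemma \ref{cas2rmoinsj}(i) (which you invoke) would give length $1$, which is false here. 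The fact you need is Lemma \ref{lemmecas1j=0}, which gives $\ell_{A_3}=2^r$; in particular this tail is not ``short'' --- it is the longest case, and it is precisely why $j=0$ produces $2^r+1$ rather than $j+1$. As written, your plan asserts $\ell_A=2^r+1$ without the lemma that supplies it, and your hesitation in that sentence suggests the right reference was not identified.

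Second, when $j$ is even the exponent $2^r-j+1$ is odd, so $u=0$ and $A_3=M^{2^r-j}+\cdots+M+1$ occurs to the first power; Proposition \ref{lemmecasM2r-2v} and Corollary \ref{corollM2v} are stated only for square powers $2^u$ with $u\geq 1$, so they do not ``precisely'' govern this subcase. The correct tool is Lemma \ref{cas2rmoinsj}(ii), giving $\ell_{A_3}=j$ and hence $\ell_A=j+1$. (A further small slip: the power of $1+M$ extracted from $1+M^{2^uw}$ is $2^u$, not $2^u-1$, since $1+M+\cdots+M^{w-1}$ with $w$ odd is odd; this does not affect $A_3$ or the length.) With these citations repaired, your case analysis coincides with the paper's proof: for odd $j=2^uw+1$ one gets $A_3=(M^{2^{r-u}-w-1}+\cdots+M+1)^{2^u}$ and Proposition \ref{lemmecasM2r-2v} gives $\ell_{A_3}=2^uw+1$, so $\ell_A=j+1$; but as it stands the $j=0$ case and the even-$j$ case rest on inapplicable references, so the proposal has a genuine gap to close rather than mere bookkeeping.
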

\begin{proof}
First, $A$ is odd so that $A_1 = A$.\\
$\bullet$ If $j=1$, then $A_2 = 1+M^{2^r} = (1+M)^{2^r}$. So, $A_3 = 1$ and ${\ell_A} = {\ell}_{A_1} = {\ell}_{A_3} + 1 = 2$.\\
$\bullet$ If $j= 2$, then $A_2 = 1+M^{2^r-1} = (1+M) (M^{2^r-2} + \cdots +M+1)$, $A_3 = M^{2^r-2} + \cdots +M+1$ with ${\ell}_{A_3} = 2$ (Lemma \ref{cas2rmoinsj}). Thus, ${\ell_A} = 3$.\\
$\bullet$ If $j = 2k \geq 4$, then
$A_2 = (1+M) A_3$, where $A_3 = M^{2^r-2k} + \cdots + M+1$, ${\ell}_{A_3} = 2k$ (Lemma \ref{cas2rmoinsj}) and ${\ell_A} = 2k+1$.\\
$\bullet$ If $j=2k+1 = 2^uw+1$ where $u \geq 1$ and $w$ odd, then
$A_2 = (1+M)^{2^u} A_3$ with $A_3 = (M^{2^{r-u}-w-1} + \cdots + M+1)^{2^u}$, ${\ell}_{A_3} = 2^uw+1$ (Proposition  \ref{lemmecasM2r-2v}). Hence, ${\ell_A} =  2^uw+1 +1 = 2k+2$.\\
$\bullet$ Finally, if $j=0$, then $A_2 = 1+M^{2^r+1} = (1+M) A_3$ with $A_3 = M^{2^r} + \cdots + M+1$, ${\ell}_{A_3} = 2^r$ (Lemma \ref{lemmecas1j=0}) so that ${\ell_A} = 2^r + 1$.
\end{proof}
Example for $n = 2^r-j \in \{9,\ldots,16\}$ ($r = 4$ and $0\leq j \leq 7$) :
$$\begin{array}{|l|c|c|}
\hline
n&\text{Degree sequence}&\text{Length}\\
\hline
9&[18, 16, 16, 16, 16, 16, 16, 0]&8\\
10&[20, 20, 16, 16, 16, 16, 0]&7\\
11&[22, 16, 16, 16, 16, 0]&6\\
12&[24, 24, 24, 24, 0]&5\\
13&[26, 24, 24, 0]&4\\
14&[28, 28, 0]&3\\
15&[30, 0]&2\\
16&[32, 32, 32, 32, 32, 32, 32, 32, 32, 32, 32, 32, 32, 32, 32, 32, 0]&17\\
\hline
\end{array}$$

\subsection{Family $\{{(1+M)}^n + 1: n \geq 2\}$} \label{famille3}
We suppose that $n$ is not a power of $2$ (see Section \ref{famille2}, for this case). We may write $n = 2^r-j$ with $r\geq 2$ and  $1\leq j \leq 2^{r-1}-1$.
\begin{proposition} \label{theofam3}
If $A = {(1+M)}^{2^r-j}+1$ where $r \geq 2$ and $1\leq j \leq 2^{r-1}-1$, then the length ${\ell_A}$ equals $2^r+1$.
\end{proposition}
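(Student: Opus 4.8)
The plan is to reduce the statement $A=(1+M)^{2^r-j}+1$, with $r\ge 2$ and $1\le j\le 2^{r-1}-1$, to a case already handled, namely Proposition \ref{theofam2} (the family $\{M^n\}$) or Lemma \ref{lemmecas1j=0}. The guiding observation is that $(1+M)$ is itself even, so $A$ is even, and $A_1$ is obtained by stripping the $x$- and $(x+1)$-powers; after that, the Collatz sequence should, after its first step, coincide with the sequence of a polynomial of the shape $M^{\,m}$ or $M^{\,m}+\cdots+M+1$ treated in Sections \ref{famille2} and \ref{cas1}. Concretely, I would first compute $A_1$: writing $n=2^r-j$ and using that over $\F_2$ we have $(1+M)^{2^a}=1+M^{2^a}$, expand $(1+M)^n+1$ and factor out the largest power of $x(x+1)$; since $1+M=x+x^2=x(x+1)$ we even have $(1+M)^n=x^n(x+1)^n$, so $A=x^n(x+1)^n+1$, which is odd as it stands at $x$ and $x+1$ (value $1$ there), hence in fact $A$ is odd and $A_1=A$. [One must double-check the parity bookkeeping here: $A(0)=A(1)=1$, so $A$ is odd and $A_1=A$, $a_0=b_0=0$.]

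Next I would run one Collatz step: $A_2=1+MA_1=1+M\bigl((1+M)^n+1\bigr)=1+M+M(1+M)^n=(1+M)\bigl(1+M(1+M)^{n-1}\bigr)$; more usefully, $A_2 = 1+M+M(1+M)^n$. The key algebraic manipulation is to recognize $1+M(1+M)^{n-1}$, or rather to split off powers of $(1+M)=x(x+1)$ from $A_2$ and identify $A_3$. Writing $n=2^r-j$ and using $1+M=x(x+1)$ together with the Frobenius identity, one expects $A_2$ to factor as $(1+M)^{e}\cdot B$ with $B$ of the form $M^{2^{r'}-j'}$ (or a truncated geometric sum in $M$) for suitable $r',j'$; then $A_3=B$ and ${\ell}_A={\ell}_B+1$. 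I would compute $e$ and $B$ explicitly, splitting into the cases $j$ odd, $j\equiv 2\pmod 4$, and the general $j=2^s w$ parallel to the case analysis in Propositions \ref{partcase1} and \ref{theofam2}, and in each case invoke Lemma \ref{cas2rmoinsj}, Lemma \ref{lemmecas1j=0}, or Proposition \ref{lemmecasM2r-2v} to evaluate ${\ell}_B$. The arithmetic should collapse, via the same telescoping of the $t_i$-exponents used in Proposition \ref{lemmecasM2r-2v}, to ${\ell}_A=2^r+1$, independent of $j$.

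The main obstacle I expect is precisely this last bookkeeping: getting the exponent $e$ of $(1+M)$ extracted from $A_2$ right, and then matching the residual factor $B$ to the exact member of a previously-treated family so that the recorded length formula applies verbatim. In particular one has to be careful that, unlike the family $M^n+1$ where the ``$+1$'' disappeared after factoring, here the interplay between the outer $+1$ and the substitution $M\leftrightarrow 1+M$ shifts which sub-lemma governs ${\ell}_B$; the cleanest route is probably to show directly that $A_2/(1+M)^{e}$ equals $M^{2^r}+\cdots+M+1$ (the polynomial of Lemma \ref{lemmecas1j=0}, whose length is $2^r$), giving ${\ell}_A=2^r+1$ uniformly. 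A secondary point to verify is the degenerate sub-case $j=2^{r-1}-1$ or other boundary values of $j$, where the exponent arithmetic could force $r'=0$; there one falls back on ${\mathcal O}_0=\{1\}$ and ${\ell}=1$, and must check the telescoping still yields $2^r+1$. Once the factorization of $A_2$ is pinned down, the rest is a mechanical application of the earlier lemmas.
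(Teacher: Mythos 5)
Your setup is fine: $A$ is indeed odd, $A_1=A$, and one step gives $A_2=(1+M)\bigl(1+M(1+M)^{2^r-j-1}\bigr)$, so $A_3=1+M(1+M)^{2^r-j-1}$ and trivially ${\ell}_A={\ell}_{A_3}+1$. The gap is in what you claim about $A_3$. Your plan requires $A_3$ (your $B$) to belong to an already-treated family --- a power $M^{m}$, or a geometric sum $M^{m}+\cdots+M+1$ possibly raised to a power of $2$ --- so that Lemma \ref{lemmecas1j=0}, Lemma \ref{cas2rmoinsj} or Proposition \ref{lemmecasM2r-2v} evaluates ${\ell}_B$. That is false for every admissible $j$: writing $n=2^r-j$, the polynomial $1+M(1+M)^{n-1}$ equals $1+M+\cdots+M^{2^r}$ only when $n-1$ has all binomial coefficients odd, i.e.\ when $n$ is a power of $2$, which is exactly the excluded case $j=0$ (note $2^{r-1}<n<2^r$ here). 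For $1\le j\le 2^{r-1}-1$, $A_3$ is of the shape $1+M^{a}(1+M)^{b}$ with $a=1$, a family none of your cited results covers (Proposition \ref{theofam5} needs $a,b\ge 2$, and its proof uses the same machinery as below), so ``${\ell}_A={\ell}_B+1$ plus a known formula for ${\ell}_B$'' is circular: you would be asserting ${\ell}_{1+M(1+M)^{n-1}}=2^r$, which is essentially the statement to be proved.

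What is missing is the multi-step induction that the paper runs: $A_{2k+1}=1+M^{k}(1+M)^{2^r-j-k}$ for $0\le k\le 2^r-j-1=m$, i.e.\ the orbit spends exactly $m+1=2^r-j$ odd terms rotating factors of $1+M$ into $M$ (each step extracts only a single factor $1+M$, so no larger exponent $e$ can be pulled out early). Only after this phase does $A_{2m+2}=(1+M)(1+M^{2^r-j})$ produce a member of a known family: if $j=2v-1$ then $A_{2m+3}=M^{2^r-2v}+\cdots+M+1$ with length $2v=j+1$ (Lemma \ref{cas2rmoinsj}), and if $j=2^{u}w$ with $w$ odd then $A_{2m+3}=(M^{2^{r-u}-w-1}+\cdots+M+1)^{2^{u}}$ with length $2^{u}w+1=j+1$ (Proposition \ref{lemmecasM2r-2v}). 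The cancellation giving ${\ell}_A=(2^r-j)+(j+1)=2^r+1$ happens between the length of this initial phase and ${\ell}_{A_{2m+3}}$; a single factor-out step cannot produce it, so as written your argument cannot be completed without adding that induction.
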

\begin{proof}
For a fixed $j\geq 1$, we prove (by induction on $k$) that
$$\text{$A_{2k+1} = 1+M^k(1+M)^{2^r-j-k}$, $0 \leq k \leq 2^r-j-1$.}$$ If $k=0$, then $A_1 = A = 1+(1+M)^{2^r-j}$ since $A$ is odd.
Suppose that $A_{2k+1} = 1+M^k(1+M)^{2^r-j-k}$. We claim that $A_{2k+3} = 1+M^{k+1}(1+M)^{2^r-j-k-1}$. One has
$A_{2k+2} = 1+MA_{2k+1} = (1+M)(1+M^{k+1}(1+M)^{2^r-j-k-1})$. So, $A_{2k+3} = 1+M^{k+1}(1+M)^{2^r-j-k-1}$.\\
Now, for $k=2^r-j-1=m$, we get $A_{2m+1} = 1+M^{2^r-j-1}(1+M)$ and
$A_{2m+2} = 1+MA_{2m+1} = (1+M) (1+M^{2^r-j})$.\\
$\bullet$ If $j = 2v-1$, then
$$A_{2m+2} = (1+M)(1+M^{2^r-j}) = (1+M)^2 (M^{2^r-2v} + \cdots+M+1).$$ Hence, $A_{2m+3} = M^{2^r-2v} + \cdots+M+1$ and
${\ell}_{A_{2m+3}} = 2v$ (Lemma \ref{cas2rmoinsj}). The odd sequence for $A$ is the union of
$[A_1, A_3, \ldots, A_{2m+1}]$ with the odd sequence for $A_{2m+3}$. Therefore,
${\ell_A} = (m+1)+ 2v= (2^r-2v+1) + 2v=2^r+1$.\\
$\bullet$ If $j = 2^uw$ with $u \geq 1$ and $w$ odd, then
$$A_{2m+2} = (1+M)(1+M^{2^r-j}) = (1+M)^{2^u} (M^{2^{r-u}-w-1} + \cdots+M+1)^{2^u}.$$ So, $A_{2m+3} = (M^{2^{r-u}-w-1} + \cdots+M+1)^{2^u}$ and
${\ell}_{A_{2m+3}} = 2^u \cdot w + 1=j+1$ (Proposition \ref{lemmecasM2r-2v}). The odd sequence for $A$ is the union of
$[A_1, A_3, \ldots, A_{2m+1}]$ with the odd sequence for $A_{2m+3}$. So,
${\ell_A} = (m+1)+ j+1=2^r+1$.
\end{proof}
Example for $n = 2^r-j \in \{9,\ldots,15\}$ ($r = 4$ and $1\leq j \leq 7$) :
$$\begin{array}{|l|c|c|}
\hline
n&\text{Degree sequence}&\text{Length}\\
\hline
9&[18, 18, 18, 18, 18, 18, 18, 18, 18, 16, 16, 16, 16, 16, 16, 16, 0]&17\\
10&[20, 20, 20, 20, 20, 20, 20, 20, 20, 20, 16, 16, 16, 16, 16, 16, 0]&17\\
11&[22, 22, 22, 22, 22, 22, 22, 22, 22, 22, 22, 20, 16, 16, 16, 16, 0]&17\\
12&[24, 24, 24, 24, 24, 24, 24, 24, 24, 24, 24, 24, 16, 16, 16, 16, 0]&17\\
13&[26, 26, 26, 26, 26, 26, 26, 26, 26, 26, 26, 26, 26, 24, 24, 24, 0]&17\\
14&[28, 28, 28, 28, 28, 28, 28, 28, 28, 28, 28, 28, 28, 28, 24, 24, 0]&17\\
15&[30, 30, 30, 30, 30, 30, 30, 30, 30, 30, 30, 30, 30, 30, 30, 28, 0]&17\\
\hline
\end{array}$$

\subsection{Family $\{1+{M}^a(M+1)^b: a,b \geq 2\}$} \label{famille5}
\begin{proposition} \label{theofam5}
For $A = 1+{M}^a(M+1)^b$ where $a,b \geq 2$, we get
$${\ell_A} = \left\{\begin{array}{l}
b+1 \text{ if $a+b=2^r$ with $r \geq 1$}\\
b+2^r(2^w-u)+1 \text{ if } \left\{\begin{array}{l}
\text{$a+b=2^ru$ with $u \geq 3$ odd, $r \geq 1$ and}\\
\text{$w$ is the least positive integer such that $u-1 < 2^w$}
\end{array}
\right.\\
\\
a+2b-1 \text{ if $a+b=2^t+1$ with $t \geq 1$}\\
b+ 2^r-2v \text{ if } \left\{\begin{array}{l}
\text{$a+b=2v+1$, $v$ is not a power of $2$,}\\
\text{$r$ is the least positive integer such that $2v < 2^r$.}
\end{array}
\right.
\end{array}
\right.$$
\end{proposition}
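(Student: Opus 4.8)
The plan is to reduce $A = 1 + M^a(M+1)^b$ to one of the already-solved families by performing the first Collatz step(s) explicitly and identifying the resulting odd polynomial. Since $A$ is odd (it has value $1$ at both $x=0$ and $x=1$), we have $A_1 = A$, and then $A_2 = 1 + MA_1 = 1 + M + M^{a+1}(M+1)^b = (M+1)\bigl(1 + M^{a+1}(M+1)^{b-1}\bigr)$; iterating, as long as the exponent of $(M+1)$ stays positive we peel off one factor of $(M+1)$ at each even step and obtain $A_{2k+1} = 1 + M^{a+k}(M+1)^{b-k}$ for $0 \le k \le b-1$. At $k = b$ we reach $A_{2b} = (M+1)\bigl(1 + M^{a+b}\bigr)$, i.e. $A_{2b+1} = \frac{1+M^{a+b}}{1+M}$ when $a+b$ is odd, or $A_{2b} = (M+1)(1+M^{a+b})$ requires more care when $a+b$ is even since then $1+M^{a+b} = (1+M^{(a+b)/2})^2$ and we must keep extracting linear factors. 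So the first main step is: establish the formula for $A_{2k+1}$, $0 \le k \le b-1$, by the same induction on $k$ used in Lemmas \ref{lemmecas1j=0} and \ref{theofam3}, and compute $A_{2b+1}$ explicitly.

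The second step is to recognise $A_{2b+1}$ (or the polynomial obtained after stripping all remaining linear factors from $A_{2b}$) as a member of a previously analysed family, so that ${\ell}_A = (b+1) + {\ell}_{A_{2b+1}} - 1 = b + {\ell}_{A_{2b+1}}$ — the union of the odd chain $[A_1,\dots,A_{2b-1}]$ of length $b$ with the odd chain of $A_{2b+1}$. Here the four cases in the statement correspond exactly to the four cases for $n := a+b$: if $a+b = 2^r$ then $1+M^{a+b} = (1+M)^{2^r}$, so $A_{2b+1} = 1$ and ${\ell}_{A_{2b+1}} = 1$, giving ${\ell}_A = b+1$; if $a+b = 2^r u$ with $u \ge 3$ odd, then after factoring $1+M^{a+b} = (1+M^u)^{2^r} = (1+M)^{2^r}(M^{u-1}+\cdots+1)^{2^r}$, the odd part is $A_{2b+1} = (M^{u-1}+\cdots+M+1)^{2^r}$, whose length is computed by Proposition \ref{lemmecasM2r-2v} / Corollary \ref{corollM2v} as $2^r(2^w - u)+1$ with $w$ minimal such that $u-1 < 2^w$; if $a+b = 2^t+1$ is of the form (power of two) plus one, then $a+b$ is odd, $A_{2b+1} = \frac{1+M^{a+b}}{1+M} = M^{2^t} + \cdots + M + 1$, whose length is $2^t$ by Lemma \ref{lemmecas1j=0}, and one checks $b + 2^t = a + 2b - 1$ using $a + b = 2^t+1$; finally if $a+b = 2v+1$ with $v$ not a power of $2$, then $A_{2b+1} = M^{2v}+\cdots+M+1$, and Corollary \ref{corollM2v} gives ${\ell}_{A_{2b+1}} = 2^r - 2v$ with $r$ minimal such that $2v < 2^r$ (using $u=1$ there, or directly the $M^{2v}+\cdots+1$ formula), whence ${\ell}_A = b + 2^r - 2v$.

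I would organise the write-up so that the induction computing $A_{2k+1}$ for $0 \le k \le b-1$ is done once, uniformly, and then split into the four arithmetic cases for $a+b$ only at the end, each being a one-line invocation of an earlier result plus an elementary identity to match the claimed closed form. The bookkeeping identity ${\ell}_A = b + {\ell}_{A_{2b+1}}$ must be justified by noting that the odd sequence of $A$ is the concatenation of $[A_1, A_3, \ldots, A_{2b-1}]$ (which is $b$ polynomials, none equal to $1$ since each is $1 + M^{a+k}(M+1)^{b-k}$ with both exponents positive for $k \le b-1$) with the entire odd sequence of $A_{2b+1}$; this is the same "concatenation of chains" device already used in Lemma \ref{cas2rmoinsj} and Propositions \ref{theofam2}, \ref{theofam3}.

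The main obstacle I anticipate is the case $a+b$ even (the first two cases), where $A_{2b} = (M+1)(1+M^{a+b})$ but $1+M^{a+b}$ is itself a square and divisible by $1+M$, so one must be careful to extract \emph{all} the linear factors at once rather than naively set $A_{2b+1} = 1 + M^{a+b}$; writing $a+b = 2^r u$ with $u$ odd and using $1 + M^{a+b} = (1+M^{u})^{2^r}$ and $1+M^u = (1+M)(M^{u-1}+\cdots+1)$ resolves this, but the valuation bookkeeping (that $M^{u-1}+\cdots+1$ is odd, i.e. coprime to $M$ and $M+1$, when $u$ is odd — which holds since it equals $\frac{M^u+1}{M+1}$ and $M^u+1$ has $M$-valuation $0$ and $(M+1)$-valuation exactly $1$) needs to be stated carefully. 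Everything else is a routine application of the lemmas of Section \ref{exemples}.
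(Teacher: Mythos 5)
Your proposal is correct and takes essentially the same route as the paper: the same induction giving $A_{2k+1}=1+M^{a+k}(1+M)^{b-k}$ for $0\le k\le b-1$, the same terminal factorization $A_{2b}=(1+M)(1+M^{a+b})$, and the same four-case reduction via Lemma \ref{lemmecas1j=0}, Proposition \ref{lemmecasM2r-2v}/Corollary \ref{corollM2v} with the bookkeeping ${\ell}_A=b+{\ell}_{A_{2b+1}}$. Only cosmetic slips remain (in the last case the exponent in Corollary \ref{corollM2v} is $2^0=1$ rather than ``$u=1$'', and ``odd'' means coprime to $x(x+1)=M+1$, not to $M$), neither of which affects the argument.
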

\begin{proof}
One gets $A_1 = A$ since $A$ is odd. We easily see that
$$\begin{array}{l}
\text{$A_{2k+1} = 1+M^{a+k}(1+M)^{b-k}$, for $1 \leq k \leq b-1 = m$,}\\
A_{2m+1} = 1+M^{a+b-1}(1+M), \\
A_{2m+2} = 1+MA_{2m+1} =(1+M)(1+M^{a+b}).
\end{array}$$
$\bullet$ If $a+b = 2^r$, then $A_{2m+2} = (1+M)^{2^r+1}$. So,
$$\text{$A_{2m+3} = 1$ and ${\ell_A} = m+1+1 = b+1$.}$$
$\bullet$ If $a+b = 2^ru$ with $u \geq 3$ odd, then $$A_{2m+2} = (1+M)(1+M^u)^{2^r} = (1+M)^{2^r+1}(1+M+\cdots+M^{u-1})^{2^r}.$$ Thus, $A_{2m+3} = (1+M+\cdots+M^{u-1})^{2^r}$. Corollary \ref{corollM2v} implies that $${\ell_A} = m+1+{\ell}_{A_{2m+3}} = b+ 2^r(2^w-(u-1)-1)+1,$$ $w$ being the least positive integer such that $u-1 < 2^w$.\\
$\bullet$ If $a+b =2^t+1$, then $$A_{2m+2} = (1+M)^2(1+M+\cdots+M^{2^t}), \ A_{2m+3} =1+M+\cdots+M^{2^t}$$ and by Lemma \ref{lemmecas1j=0}, $${\ell_A} = m+1+{\ell}_{A_{2m+3}} = b+ 2^t = a+2b-1.$$
$\bullet$ If $a+b =2v+1$ where $2v$ is not a power of $2$, then
$$A_{2m+2} = (1+M)^2(1+M+\cdots+M^{2v}),\ A_{2m+3} =1+M+\cdots+M^{2v}.$$ Corollary \ref{corollM2v} implies that ${\ell_A} = m+1+{\ell}_{A_{2m+3}} = b+ 2^r-2v$, $r$ being the least positive integer such that $2v < 2^r$.
\end{proof}
Example for $2 \leq a \leq b \leq 5$, $a+b \leq 10$ :
$$\begin{array}{|l|l|l|}
\hline
(a,b)&\text{Degree sequence}&\text{Length}\\
\hline
(2,2)&[8, 8, 0]&3\\
(2,3)&[10, 10, 10, 8, 8, 8, 0]&7\\
(2,4)&[12, 12, 12, 12, 8, 8, 0]&7\\
(2,5)&[14, 14, 14, 14, 14, 12, 0]&7\\
(3,3)&[12, 12, 12, 8, 8, 0]&6\\
(3,4)&[14, 14, 14, 14, 12, 0]&6\\
(3,5)&[16, 16, 16, 16, 16, 0]&6\\
(4,4)&[16, 16, 16, 16, 0]&5\\
(4,5)&[18, 18, 18, 18, 18, 16, 16, 16, 16, 16, 16, 16, 0]&13\\
(5,5)&[20, 20, 20, 20, 20, 16, 16, 16, 16, 16, 16, 0]&12\\
\hline
\end{array}$$
\subsection{Family $\{({M}^2+M+1)^n: n \geq 2\}$} \label{famille4}
We state the following conjecture. Note that Lemma \ref{lemmecasM2r-2} treats the case where $n = 2^u$, $u \geq 1$.
\begin{conjecture} \label{theofam3}
If $A = ({M}^2+M+1)^n$ with $n \geq 2$, then ${\ell}_A$ equals $n+1$.
\end{conjecture}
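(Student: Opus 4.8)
\emph{Setting up.} The plan is first to observe that the whole Collatz orbit of $A=(M^2+M+1)^{n}$ stays inside $\F_2[M]$. Since $M\equiv 1\pmod{x}$, $M\equiv 1\pmod{x+1}$ and $M+1=x(x+1)$, for an odd polynomial $P(M)$ — equivalently for $P\in\F_2[Y]$ with $P(1)=1$ — one expands $P(Y)=\sum_{j}e_j(Y+1)^j$ and finds $1+MP(M)=\sum_{j\ge 1}(e_{j-1}+e_j)\bigl(x(x+1)\bigr)^{j}$, so $val_x$ and $val_{x+1}$ of it coincide and both equal $\mathrm{ord}_{Y=1}\bigl(1+YP(Y)\bigr)$. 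Hence $A_{2k+1}=\hat P_k(M)$ where $\hat P_0=(Y^2+Y+1)^{n}$, $\hat P_{k+1}=\bigl(1+Y\hat P_k\bigr)/(Y+1)^{v_{k+1}}$ and $v_{k+1}=\mathrm{ord}_{Y=1}(1+Y\hat P_k)$. Writing $E_k(z):=z^{\,v_1+\cdots+v_k}\,\hat P_k(z+1)\in\F_2[z]$ (so $E_0=(z^2+z+1)^{n}$), the substitution $Y=z+1$ collapses all of this into the single self-contained recursion
$$E_{k+1}(z)=(z+1)E_k(z)+z^{\,v_z(E_k)},\qquad E_0(z)=(z^2+z+1)^{n}.$$
One checks immediately that $\deg E_k=2n+k$, that $v_z(E_k)$ is strictly increasing in $k$, and that $E_k$ is a monomial if and only if $A_{2k+1}=1$. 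So the statement is equivalent to: \emph{the first $k$ with $E_k$ a monomial is $k=n$} (and then $E_n=z^{3n}$), which gives $\ell_A=n+1$.

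\emph{Reduction to one lemma.} I would prove by strong induction on $n\ge 1$: if $2^{t}\le n<2^{t+1}$ then
$$E_{2^{t}}(z)=z^{\,3\cdot 2^{t}}\,(z^{2}+z+1)^{\,n-2^{t}}.\qquad(\ast)$$
Assuming $(\ast)$, the theorem follows: the recursion is translation-covariant ($E_k=z^{c}F_k$ with $F$ obeying the same recursion forces $E_{k+1}=z^{c}F_{k+1}$), so $(\ast)$ gives $E_{2^{t}+j}=z^{\,3\cdot 2^{t}}E_j^{(n-2^{t})}$, where $E^{(m)}$ denotes the sequence started from $(z^2+z+1)^{m}$. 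Since $n-2^{t}<2^{t}\le n$, the induction hypothesis says $E_j^{(n-2^{t})}$ is first a monomial at $j=n-2^{t}$, equal there to $z^{\,3(n-2^{t})}$; hence $E_k$ is first a monomial at $k=2^{t}+(n-2^{t})=n$, with $E_n=z^{\,3\cdot 2^{t}}z^{\,3(n-2^{t})}=z^{3n}$. (For $k<2^{t}$ non-monomiality is automatic because $\deg\hat P_k$ is non-increasing and $\deg\hat P_{2^{t}}=2(n-2^{t})\ge 2$ when $n>2^{t}$; the case $n=2^{t}$ is treated below.)

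\emph{Proof of $(\ast)$: the cases I can do.} When $n=2^{t}$, $(\ast)$ reads $E_{2^{t}}=z^{\,3\cdot 2^{t}}$; since $M^{2}+M+1=M^{2^{1}}+M+1$, this is Lemma~\ref{lemmecasM2r} with $r=1$, $u=t$ (it yields $\ell_A=2^{t}+1$, i.e. termination at step $2^{t}$ with terminal value $z^{\,3\cdot 2^{t}}$); the case $n=1$ is the direct computation $E_1=(z+1)(z^2+z+1)+1=z^{3}$. When $n=2m$ is even and not a power of $2$, note $2^{t-1}\le m<2^{t}$ and, over $\F_2$, $(z^2+z+1)^{2m}=E_0^{(m)}(z)^{2}$; a short induction on $k$ (using $E_j^{(m)}(1)=1$) then gives
$$E_{2k}^{(2m)}=\bigl(E_k^{(m)}\bigr)^{2},\qquad E_{2k+1}^{(2m)}=\frac{\bigl(E_{k+1}^{(m)}\bigr)^{2}+z^{\,2\,v_z(E_k^{(m)})+1}}{z+1}.$$
Evaluating the first formula at $k=2^{t-1}$ and invoking $(\ast)$ for $m<n$ gives $E_{2^{t}}^{(2m)}=\bigl(E_{2^{t-1}}^{(m)}\bigr)^{2}=\bigl(z^{\,3\cdot 2^{t-1}}(z^2+z+1)^{\,m-2^{t-1}}\bigr)^{2}=z^{\,3\cdot 2^{t}}(z^2+z+1)^{\,2m-2^{t}}$, which is $(\ast)$ for $n=2m$.

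\emph{The remaining case and the main obstacle.} What is left is $(\ast)$ for $n$ \emph{odd} with $t\ge 1$: the claim that the first $2^{t}$ steps of the recursion, started from $(z^{2}+z+1)^{n}=(z^{2^{t+1}}+z^{2^{t}}+1)\,(z^{2}+z+1)^{\,n-2^{t}}$, convert the ``pure-power'' factor $z^{2^{t+1}}+z^{2^{t}}+1$ into $z^{\,3\cdot 2^{t}}$ and drag the factor $(z^{2}+z+1)^{\,n-2^{t}}$ along untouched. This is the genuinely hard part. The recursion $E\mapsto(z+1)E+z^{\,v_z(E)}$ is not multiplicative, and — unlike in the families of Section~\ref{exemples} — the intermediate iterates $E_1,\dots,E_{2^{t}-1}$ are ``anonymous'' odd polynomials with no special shape (already for $n=5$ one meets $\hat P_2=z^{7}+z^{6}+z^{4}+z^{2}+1$), so there is no visibly smaller instance to recurse into. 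The route I would pursue is to track the recursion on the coefficient vector, where one step is ``replace the coefficients by their successive XOR-differences and delete leading zeros'' (the number deleted being $v_{k+1}$); the coefficient vector of $(z^{2}+z+1)^{n}$ has the Sierpiński/Lucas structure dictated by the binary digits of $n$, and $(\ast)$ becomes a combinatorial identity about how this ``difference-and-shift'' map acts on that vector over exactly $2^{t}$ iterations. The crux — and, I suspect, the reason this is stated as a conjecture — is controlling the interaction (the carries) between the block produced by $z^{2^{t+1}}+z^{2^{t}}+1$ and the block produced by $(z^{2}+z+1)^{\,n-2^{t}}$ during those $2^{t}$ steps.
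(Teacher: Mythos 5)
You have not proved the statement, and you say so yourself; the gap is real and it is exactly where the whole difficulty of the problem sits. For context: the paper does not prove this statement either — it is left as a conjecture, supported only by computations and by the special case $n=2^u$ (which is Lemma \ref{lemmecasM2r} with $r=1$, equivalently Lemma \ref{lemmecasM2r-2} with $r=2$) — so there is no proof of it in the paper to compare yours against. Your reductions do check out: since $M+1=x(x+1)$ and an odd $P(M)$ has $P(1)=1$, the expansion of $1+MP(M)$ in powers of $M+1$ shows $val_x=val_{x+1}=\mathrm{ord}_{Y=1}\bigl(1+YP(Y)\bigr)$, so the orbit stays in $\F_2[M]$ and collapses to the one-variable recursion $E_{k+1}=(z+1)E_k+z^{\,v_z(E_k)}$ with $E_0=(z^2+z+1)^n$; the translation covariance under $E_k\mapsto z^cF_k$ is immediate; and I verified the two-step computation behind the doubling identity $E^{(2m)}_{2k}=\bigl(E^{(m)}_k\bigr)^2$, which correctly disposes of even $n$ once the smaller instance is known. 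So you have genuinely reduced the conjecture to your claim $(\ast)$ for odd $n$.

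But that remaining case carries essentially all the content. Without $(\ast)$ for odd $n\ge 3$, your induction closes only when $n$ is a power of $2$ — precisely the case already covered by the paper's lemmas — since every other $n$ reduces, after finitely many halvings and one splitting-off of the top dyadic block, to an unproven odd instance. For the odd case you offer only a plan (tracking the ``difference-and-shift'' action on the coefficient vector of $(z^2+z+1)^n$ and controlling the interaction between the block coming from $z^{2^{t+1}}+z^{2^t}+1$ and the block coming from $(z^2+z+1)^{n-2^t}$ over $2^t$ steps), with no argument that this interaction can actually be controlled; as you note, the intermediate iterates have no visible structure already for $n=5$. So the proposal is a correct and potentially useful partial reduction, but it is not a proof, which is consistent with the statement being stated as a conjecture rather than a theorem.
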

Example for $n \in \{9,\ldots,16\}$ :
$$\begin{array}{|l|c|c|}
\hline
n&\text{Degree sequence}&\text{Length}\\
\hline
9&[36, 32, 32, 32, 32, 32, 30, 30, 4, 0]&10\\
10&[40, 40, 32, 32, 28, 28, 28, 28, 8, 8, 0]&11\\
11&[44, 42, 36, 34, 32, 28, 26, 26, 12, 10, 4, 0]&12\\
12&[48, 48, 48, 48, 40, 40, 40, 40, 16, 16, 16, 16, 0]&13\\
13&[52, 48, 46, 46, 44, 40, 38, 38, 20, 16, 14, 14, 4, 0]&14\\
14&[56, 56, 52, 52, 48, 48, 44, 44, 24, 24, 20, 20, 8, 8, 0]&15\\
15&[60, 58, 56, 54, 52, 50, 48, 46, 28, 26, 24, 22, 12, 10, 4, 0]&16\\
16&[64, 64, 64, 64, 64, 64, 64, 64, 64, 64, 64, 64, 64, 64, 64, 64, 0]&17\\
\hline
\end{array}$$
\subsection{Family $\{x^n+x+1: n \geq 2\}$}
A priori, this family does not contain any polynomial in $M$, except for $n=2$ and $n=4$.  
We state two conjectures.
\begin{conjecture} \label{proppartcase}
Let $s$ be the greatest integer such that  $n-2^{s+1} \geq 1$. Then, for any positive integer $t \leq s-1$, the odd sequence contains $2^t$ polynomials which have the same degree $d_t$.
In particular, $d_1 = \deg(A_5) = \deg(A_7)$ and  $d_2 = \deg(A_9) = \deg(A_{11}) = \deg(A_{13})=\deg(A_{15})$.
\end{conjecture}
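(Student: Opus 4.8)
The plan is to compute the Collatz iterates of $A = x^n+x+1$ directly. Since $A(0) = A(1) = 1$ for every $n \ge 2$, the polynomial $A$ is odd, hence $A_1 = A$ and $d_1 = n$. Expanding $1 + MA$ one gets
$$A_2 = 1 + MA = x^{n+2}+x^{n+1}+x^n+x^3 = x^3\bigl(x^{n-3}M+1\bigr),$$
so $a_2 = 3$; and, writing $y = x+1$ and reducing modulo $y^2$, one finds $x^{n-3}M+1 \equiv (n-2)\,y \pmod{y^2}$, whence $b_2 = 1$ and $d_3 = n-2$ when $n$ is odd, while $b_2 \ge 2$ when $n$ is even. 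So the proof must split according to the parity of $n$; iterating this split, and using the factorisations $M+1 = x(x+1)$ and $x^m+1 = (x+1)^m$ over $\F_2$, one is led to cases governed by the binary expansions of $n$ and of $n-1$, the cleanest being $n$ odd with $n-1$ a power of $2$, for which $A_3 = (x+1)^{n-2}+x^{n-3}$.

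The key simplification is an elementary criterion deciding when the degree of the odd subsequence does not drop. For an odd polynomial $B = 1 + b_1x + \cdots$, reduction modulo $x^2$ gives $1 + MB \equiv (1+b_1)x \pmod{x^2}$, so $x^2 \mid 1+MB$ iff $b_1 = 1$; since $M$ has the same form in the variable $x+1$, symmetrically $(x+1)^2 \mid 1+MB$ iff the coefficient of $x+1$ in the expansion of $B$ about $1$ vanishes, i.e. iff $B$ has an even number of monomials of odd degree. Combined with $d_{2k+1} = d_{2k-1}+2-a_{2k}-b_{2k}$ from Lemma \ref{lesdkellk}, this yields: $d_{2k+1} = d_{2k-1}$ precisely when $A_{2k-1}$ has zero linear term and an even number of odd-degree terms. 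Call an odd polynomial with these two properties \emph{flat}. Then Conjecture \ref{proppartcase} is equivalent to the statement that $A_i$ is flat for every odd index $i$ with $2^{t+1}+1 \le i \le 2^{t+2}-3$ and $1 \le t \le s-1$ (and, for the count to be exactly $2^t$, that flatness fails at $i = 2^{t+2}-1$).

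To prove this I would establish, by induction on the index, enough of a closed form for the odd iterates $A_1, A_3, A_5, \dots$ to read off their linear coefficient and the parity of their number of odd-degree monomials. The small cases suggest a self-similar mechanism: for $n = 9$ one computes $A_1 = x^9+x+1$, $A_3 = (x+1)^7+x^6$, $A_5 = (x^3+x+1)^2$, $A_7 = x^6+x^4+x^3+x+1$, $A_9 = 1$, and the appearance of the Frobenius square $(x^3+x+1)^2$ — the square of the $A_3$-term of the $n = 5$ sequence — indicates that after a suitable number of odd steps the iterate equals, up to a power of Frobenius and a factor $x^\alpha(x+1)^\beta$, the corresponding iterate of $x^{n'}+x+1$, where $n'$ is obtained from $n$ by deleting the top bit of $n-1$. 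Granting such a recursion $n \rightsquigarrow n'$, the proposition follows by induction on $s$: the length-$2^{s-1}$ plateau (the case $t = s-1$) is produced by one application of the recursion, and the shorter plateaus ($t < s-1$) are the plateaus of the smaller instance, each doubled in length by the Frobenius squaring.

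The hard part is exactly this last step. Unlike the families treated in Sections \ref{famille2}, \ref{famille3} and \ref{famille5}, the odd iterates of $x^n+x+1$ do not stay inside a one-parameter family $1 + M^\alpha(x(x+1))^\beta$, so there is no direct induction available; one must first guess the precise recursive rule — which bit of $n$ is consumed at each stage, how the Frobenius exponent grows, and how the auxiliary factors $x^\alpha(x+1)^\beta$ accumulate — and then verify it by a somewhat delicate polynomial identity over $\F_2$. Once the recursion is secured, tracking $s$, the plateau degrees $d_t$, and the plateau lengths of the reduced instance is routine bookkeeping.
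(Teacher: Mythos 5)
There is no proof of this statement to compare against: in the paper it is stated as Conjecture \ref{proppartcase} and left open (only the implication Conjecture \ref{proppartcase} $\Rightarrow$ Conjecture \ref{theorempartcase} is argued). Your proposal does not close it either, and you say so yourself: the entire content of the statement is concentrated in the ``recursion $n \rightsquigarrow n'$'' (delete the top bit of $n-1$, square by Frobenius, absorb factors $x^\alpha(x+1)^\beta$), which you never formulate precisely, let alone verify. What you do establish is only a reformulation: the flatness criterion correctly translates ``the degree does not drop at step $k$'' into ``$A_{2k-1}$ has zero linear term and an even number of odd-degree monomials'' (note an internal slip: $(x+1)^2 \mid 1+MB$ iff the coefficient of $x+1$ in $B$ equals $1$, i.e.\ iff $B$ has an \emph{odd} number of odd-degree monomials --- your final criterion is right, but it contradicts the sentence preceding it). That criterion gives no mechanism for controlling the coefficient pattern of $A_{2k-1}$ across $2^t$ consecutive steps, which is exactly what the conjecture asserts; so the ``equivalent statement'' you reach is just as hard as the original, and the induction on $s$ you sketch has no base beyond hand-computed examples such as $n=9$ (which, for what it is worth, you compute correctly). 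Also, $x^m+1=(x+1)^m$ over $\F_2$ only when $m$ is a power of $2$; as stated, that auxiliary identity is false in general.

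In short: the opening computations ($A_2=x^3(x^{n-3}M+1)$, the parity split on $n$, the flatness criterion, the reduction of the plateau claim to persistence of flatness on index ranges $2^{t+1}+1\le i\le 2^{t+2}-3$) are sound and would be a reasonable first section of an attack, but the decisive step --- proving the self-similar recursion and that it doubles plateau lengths --- is missing, so the conjecture remains unproved by this proposal, just as it is unproved in the paper.
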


\begin{conjecture} \label{theorempartcase}
Let $s$ be the greatest integer such that  $n-2^{s+1} \geq 1$. Then, the length $\ell_A$ equals $2^s+1$.
\end{conjecture}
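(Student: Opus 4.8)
The plan is an induction that peels off the odd Collatz sequence of $A=x^n+x+1$ one term at a time, exploiting that the sequence is ``memoryless''. Since $A(0)=A(1)=1$, the polynomial $A$ is odd, so $A_1=A$ and the odd sequence of $A$ is $[A_1]$ followed by the odd sequence of $A_3$; hence $\ell_A=1+\ell_{A_3}$, and everything reduces to identifying $A_3$ explicitly and locating a descent. For $n\ge 5$ one computes
$$A_2=1+MA_1=x^{n+2}+x^{n+1}+x^{n}+x^{3}=x^{3}\bigl(x^{n-3}M+1\bigr),$$
so $a_2=3$, the second factor having constant term $1$. For $b_2$ I would substitute $y=x+1$ and use $M(y+1)=y^2+y+1=(y^3+1)/(y+1)$ to rewrite
$$x^{n-3}M+1=(y+1)^{n-4}y^{3}+\bigl((y+1)^{n-4}+1\bigr);$$
the two summands have $y$-adic valuations $3$ and $2^{v_2(n-4)}$ respectively (the latter because $v_y\bigl((y+1)^m+1\bigr)=2^{v_2(m)}$, a consequence of Lucas' theorem), so $b_2=\min\bigl(3,2^{v_2(n-4)}\bigr)\in\{1,2,3\}$ depending only on $n\bmod 4$, and $A_3=\bigl(x^{n-3}M+1\bigr)/(x+1)^{b_2}$ is an explicit polynomial of degree $n-1-b_2$, automatically odd. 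The cases $n\in\{2,3,4\}$ serve as base: there $A_3=1$ and $\ell_A=2=2^{0}+1$.

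The second step is to feed $A_3$ back into the machine. The two maps $x\mapsto x+1$ and $S(x)\mapsto x^{\deg S}S(1/x)$ both fix $x(x+1)$ and $M$, hence commute with the Collatz transformations and preserve lengths; applying $x\mapsto x+1$ replaces $A_3$ by $\widehat A_3=\bigl((y+1)^{n-4}y^{3}+(y+1)^{n-4}+1\bigr)/y^{b_2}$ (in the variable $y$), whose exponent pattern is dictated by the binary digits of $n-4$. The target identity is $\ell_{\widehat A_3}=2^{s}$, which together with $\ell_A=1+\ell_{A_3}=1+\ell_{\widehat A_3}$ gives $\ell_A=2^{s}+1$. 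The hoped-for mechanism is that an ``effective $s$'' attached to $\widehat A_3$ drops by exactly one at each such step, and that after the prescribed number of steps the process collapses to $x(x+1)$.

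The hard part — and the reason this is left as a conjecture — is that $\widehat A_3$ leaves the one-parameter family $\{x^m+x+1\}$ already for small $n$ (for $n=7$ it is $x^5+x^3+x^2+x+1$, and the orbit for $n=9$ even passes through $(x^3+x+1)^{2}$), so the induction cannot close on that family alone. What is needed is a larger class $\mathcal F$ containing every $x^n+x+1$ and stable under $A\mapsto A_3$ up to the two symmetries above, together with a ``level'' statistic $\lambda\colon\mathcal F\to\N$ with $\ell_A=\lambda(A)+1$, $\lambda(x^n+x+1)=2^{s}$, and a controlled descent for $\lambda$ under $A\mapsto A_3$; the run of $2^{t}$ equal degrees predicted by Conjecture \ref{proppartcase} would be the visible trace of $\lambda$ decreasing, and the block sizes $1,1,2,4,\dots,2^{s-1}$ together with the terminal $1$ already sum to $2^{s}+1$. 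Pinning down $\mathcal F$ and $\lambda$ and verifying stability against the valuation computations at each step — all of which rest on $2$-adic valuations of the exponents generated along the way, i.e. on repeated appeals to Lucas' theorem — is where the real difficulty lies; I expect the nested case analysis on the binary expansion of $n$ (structurally parallel to the proof of Proposition \ref{lemmecasM2r-2v}) to be the main obstacle. A safer intermediate goal is to prove Conjecture \ref{proppartcase} first and then show the sequence terminates immediately after the $t=s-1$ block, which by the block count above already yields the theorem.
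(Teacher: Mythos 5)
The statement you were given is stated as a conjecture in the paper, and the paper itself does not prove it: it only remarks that it follows from Conjecture \ref{proppartcase}, since by Corollary \ref{casell=0} the odd sequence has length $m+1$ and the conjectured block structure forces $m+1=1+1+2+\cdots+2^{s-1}+1=2^s+1$. Your proposal, to its credit, does not pretend to close this gap: the explicit computations are correct as far as they go --- $A_2=1+MA=x^3(x^{n-3}M+1)$, hence $a_2=3$ for $n\ge 4$, and the substitution $y=x+1$ together with $v_y\bigl((y+1)^m+1\bigr)=2^{v_2(m)}$ indeed gives $b_2=\min\bigl(3,2^{v_2(n-4)}\bigr)$ --- but they only identify $A_3$; the induction never closes because, as you say yourself, $A_3$ leaves the family $\{x^m+x+1\}$, and the class $\mathcal F$ and the level statistic $\lambda$ are never exhibited. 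Your ``safer intermediate goal'' (prove Conjecture \ref{proppartcase} first, then count the blocks $1+1+2+\cdots+2^{s-1}+1=2^s+1$) is exactly the paper's own remark, so the end point of your program coincides with what the paper offers; neither is a proof of Conjecture \ref{theorempartcase}.

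One concrete error in the proposed machinery: you assert that the reciprocal map $S\mapsto S^*$, $S^*(x)=x^{\deg S}S(1/x)$, commutes with the Collatz transformations and preserves lengths because it fixes $x(x+1)$ and $M$. This is false, and the paper's Remarks (Subsection \ref{remarques}) give an explicit counterexample: $A=x^8+x^3+1$ has odd degree sequence $[8,7,5,5,4,3,0]$ (length $7$), while $A^*=x^8+x^5+1$ gives $[8,6,6,0]$ (length $4$). The obstruction is the constant term: $(1+MA)^*=M^*A^*+x^{\deg(MA)}$, not $1+M^*A^*$, so the step $A_{2k+1}\mapsto 1+MA_{2k+1}$ is not conjugated by the reciprocal operation; only $x\mapsto x+1$ is a legitimate symmetry. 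Hence if your class $\mathcal F$ is to be stable only ``up to the two symmetries,'' one of the two is unavailable and the descent argument must be rebuilt without it. A smaller quibble: your base case $n=2$ lies outside the hypothesis, since no integer $s\ge 0$ satisfies $2-2^{s+1}\ge 1$, so the claimed verification $\ell_A=2^0+1$ there is vacuous rather than confirming the formula.
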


Conjecture \ref{theorempartcase} follows from
Conjecture \ref{proppartcase}. Indeed, from Corollary \ref{casell=0}, the sequence of odd polynomials is of length $m+1$:
$$[A_1, A_3, A_5, A_7, \ldots, A_{2m-1-2^s}, \ldots,A_{2m-3},A_{2m-1}, 1].$$
One has, by Conjecture \ref{proppartcase}, $$m+1=1+1+2+2^2+\cdots +2^{s-1} + 1 = 1+(2^s-1) + 1 = 2^s+1.$$
Example for $n \in \{7,8,14,15,16,17,18\}$
$$\begin{array}{|l|c|c|}
\hline
n&\text{Degree sequence}&\text{Length}\\
\hline
7&[7, 5, 0]&3\\
8&[8,4, 0]&3\\
14&[14, 11, 8, 8, 0]&5\\
15&[15, 13, 8, 8, 0]&5\\
16&[16, 12, 8, 8, 0]&5\\
17&[17, 15, 14, 14, 12, 12, 12, 12, 0]&9\\
18&[18, 15, 14, 14, 12, 12, 12, 12, 0]&9\\
\hline
\end{array}$$
\subsection{Remarks} \label{remarques}
We denote by $\overline{S}$ the polynomial obtained from $S \in \F_2[x]$, by replacing $x$ by $x+1$. We also consider the reciprocal  $S^*$ of $S$ as:
$\displaystyle{S^*(x) = x^{\deg(S)} \cdot S(\frac{1}{x}).}$\\
It is easy to see that the Collatz sequences of $\overline{A}$ are exactly obtained from those of $A$ by applying the operation: $S \mapsto \overline{S}$.
But for $A^*$, it is not true (in general). For example, if $A=x^8+x^3+1$, then the odd degree sequence is $[8, 7, 5, 5, 4, 3, 0]$, whereas for $A^*=x^8+x^5+1$, one gets $[8, 6, 6, 0]$.

\end{document}